\numberwithin{equation}{section}
\numberwithin{figure}{section}
\newtheorem*{theoremT}{Theorem \ref{firstTachikawa}}
\newtheorem*{theoremG}{Theorem \ref{characterizationGolod}}
\newtheorem{theorem}{Theorem}[section]
\newtheorem*{propositionM}{Proposition \ref{monotonicity}}
\newtheorem{proposition}[theorem]{Proposition}
\newtheorem{lemma}[theorem]{Lemma}
\newtheorem {corollary}[theorem]{Corollary}
\theoremstyle {definition}
\newtheorem {definition}[theorem]{Definition}
\newtheorem {example}[theorem]{Example}
\newtheorem {question}[theorem]{Question}
\newtheorem {problem}[theorem]{Problem}
\newtheorem {conjecture}[theorem]{Conjecture}
\theoremstyle {remark}
\newtheorem{remark}[theorem]{Remark}
\DeclareMathOperator{\Hom}{Hom}
\DeclareMathOperator{\Ext}{Ext}
\DeclareMathOperator{\Tor}{Tor}
\DeclareMathOperator{\Ima}{Im}
\DeclareMathOperator{\Ker}{Ker}
\DeclareMathOperator{\Cok}{Coker}
\DeclareMathOperator{\depth}{depth}
\DeclareMathOperator{\soc}{soc}
\DeclareMathOperator{\syz}{syz}
\newcommand{\fm}{\ensuremath{\mathfrak m}}
\newcommand{\fn}{\ensuremath{\mathfrak n}}
\newcommand{\bN}{\ensuremath{\mathbb N}}
\newcommand{\rh}{\ensuremath{\mathrm h}}
\newcommand{\rH}{\ensuremath{\mathrm H}}
\begin{document}

\title[On direct summands of syzygies of the residue field of a local ring]{On direct summands of syzygies of the residue field of a local ring}

\author{\fontencoding{T5}\selectfont \DJ o\`an Trung C\uhorn{}\`\ohorn ng} \address{\fontencoding{T5}\selectfont \DJ o\`an Trung C\uhorn{}\`\ohorn ng, Institute of Mathematics, Vietnam Academy of Science and Technology, 18 Hoang Quoc Viet, 10072 Hanoi, Viet Nam.} \email{dtcuong@math.ac.vn}

\author{Toshinori Kobayashi }
\address{Toshinori Kobayashi, Department of Mathematics, School of Science and Technology, Meiji University, 1-1-1 Higashi- mita, Tama-ku, Kawasaki 214-8571, Japan.}
\email{tkobayashi@meiji.ac.jp}


\thanks{\fontencoding{T5}\selectfont \DJ o\`an Trung C\uhorn{}\`\ohorn ng was funded by Vietnam Academy of Science and Technology under the project code CTTH00.01/25-26.
Toshinori Kobayashi was partly supported by JSPS Grant-in-Aid for Early-Career Scientists 25K17240.}

\subjclass[2020]{Primary: 13D02. Secondary: 13D07, 13H10}

\keywords{syzygy, direct summand, Betti growth, $\Ext$-vanishing, Golod ring, Tachikawa conjecture}

\begin{abstract}
We investigate local rings in which a syzygy of the residue field occurs as a direct summand of another syzygy of the field. This class of local rings includes Golod rings, Burch rings and non-trivial fiber products of local rings. For such rings, we prove that the Betti sequence of any finitely generated module is eventually periodically non-decreasing. As an application, we confirm the Tachikawa conjecture for all Cohen-Macaulay local rings satisfying this syzygy condition. In the second part of the paper, we show that a recursive direct sum decomposition of the syzygy of the residue field characterizes Golod rings, thereby establishing the converse to a recent theorem of Cuong-Dao-Eisenbud-Kobayashi-Polini-Ulrich \cite{CDEKPU25}.
\end{abstract}
\maketitle


\section{Introduction}

Let $(R, \fm, k)$ be a Noetherian local ring. We denote the $n$-th syzygy of an $R$-module $M$ by $\syz^R_n(M)$. In commutative algebra, the study of direct sum decompositions and direct summands of syzygies of the residue field of local rings is an interesting topic. Any information about such decompositions and corresponding summands would lead to remarkable insights into the (co)homology and singularity of the ring. 

For examples, Dutta \cite[Corollary 1.3]{Dut89} showed that $R$ is regular if and only if a syzygy $\syz^R_n(k)$ contains a nonzero free direct summand for some $n\geq 0$. This result motivates a broad program of identifying special direct summands of syzygies and applying them to characterize ring properties. Extending Dutta's theorem, Takahashi \cite[Theorem 4.3]{Tak06}  proved that $R$ is regular if and only if a syzygy $\syz^R_n(k)$ contains a semi-dualizing direct summand for some $n$. In the same spirit, he further showed that $R$ is Gorenstein if and only if a syzygy $\syz^R_n(k)$ contains a non-zero G-projective direct summand for some $n\leq \depth(R)+2$ (see \cite[Theorem 6.5]{Tak06}). For Cohen-Macaulay rings, Snapp \cite[Theorem 2.4]{Sna10} proved that $R$ is Cohen-Macaulay if and only if there exists a system of parameters $\underline x$ of $R$, which is part of a minimal generating set of $\fm$, such that a syzygy $\syz^R_n(R/\underline x)$ contains a non-zero free direct summand. 

On the other hand, if some syzygies of the residue field contain a simple direct summand, that is, a summand isomorphic to $k$, then one may expect $R$ to have a complicated structure. Such rings are called non-exceptional in the sense of Lescot \cite{Les85}. Nonetheless, over a non-exceptional local ring, Lescot \cite[Proof of Theorem B]{Les85} proved that some syzygies of a finitely generated $R$-module always contain a simple direct summand. For these rings, one can confirm Avramov's question on the eventual growth of the Betti sequence of general modules. Recently, further subclasses of non-exceptional rings have been identified. Dao-Kobayashi-Takahashi \cite[Theorem 4.1]{DKT20} showed that $R$ is a Burch ring of depth zero if and only if $\syz^R_2(k)$ contains a simple direct summand. Furthermore, DeBellevue-Miller \cite[Theorem A]{DM}, extending the result of Dao-Eisenbud \cite[Theorem 0.1]{DE22},  proved that if $R$ has depth zero and Burch index at least $2$, then the syzygies $\syz^R_n(M)$ of any finitely generated $R$-module $M$ contain a simple direct summand for all $n\geq 5$.

In general, it is quite difficult to identify non-trivial direct sum decomposition of syzygies. If $R$ is a Gorenstein Artinian local ring, then it is well-known that the syzygies of any indecomposable module are indecomposable (see \cite[Lemma 8.17]{Yos90}). In particular, the modules $\syz^R_n(k)$'s are indecomposable. In sharp contrast to the Gorenstein case, Cuong-Dao-Eisenbud-Kobayashi-Polini-Ulrich recently discovered the following decomposition for Golod rings \cite[Theorem 1.1]{CDEKPU25}.

\begin{theorem}\label{generalGolod}
Let $(R,\fm,k)$ be a Noetherian local ring of embedding dimension $e$ and let $K_{\bullet}$ be the Koszul complex on a minimal set of generators of $\fm$. If $R$ is Golod then 
\begin{equation} \label{generalGolod22}
\syz_{e+1}^R(k) \simeq \bigoplus_{i = 0}^{e-1} \syz_i^R(k)^{h_{e-i}}
\end{equation}
and, more generally,
$$
\syz_{e+1+j}^R(k) \simeq \bigoplus_{i=0}^{e-1} \syz_{i+j}^R(k)^{h_{e-i}},
$$
for any $j\geq 0$, where $h_i = \dim_k(H_i(K_{\bullet}))$. 
\end{theorem}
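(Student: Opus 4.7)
My plan is to construct the promised isomorphism directly from the structure of a minimal free resolution of $k$ over the Golod ring $R$, using the classical Shamash--Eagon construction. Let $K_\bullet$ be the Koszul complex on a minimal generating set $x_1,\ldots,x_e$ of $\fm$, and for each $i\geq 1$ choose cycles $\{c_{i,\alpha}\}_{\alpha=1}^{h_i}$ in $K_i$ whose classes form a $k$-basis of $H_i(K_\bullet)$. To each $c_{i,\alpha}$ associate a divided-power variable $t_{i,\alpha}$ of homological degree $i+1$, and set
$$F_\bullet := K_\bullet \otimes_R R\langle t_{i,\alpha}\rangle,$$
where $R\langle\cdot\rangle$ denotes the free divided-power $R$-algebra on the indicated generators. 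The Golod hypothesis is precisely what lets one extend the Koszul differential to a differential $d$ on $F_\bullet$ with $d(t_{i,\alpha})=c_{i,\alpha}$ (and trivial Massey products at higher divided-power weights), yielding a minimal free resolution of $k$ whose Betti numbers satisfy $P^R_k(t) = (1+t)^e / (1 - \sum_{i\geq 1} h_i t^{i+1})$.

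With this resolution in hand, I would analyze $\syz^R_{e+1}(k) = \Ker(d\colon F_e\to F_{e-1})$ by exploiting the natural splitting of $F_\bullet$ by divided-power weight. The module $F_e$ decomposes into pieces $K_j\otimes T$ indexed by monomials $T$ in the $t_{i,\alpha}$; isolating the weight-one contributions, one finds for each $i$ with $0\leq i\leq e-1$ exactly $h_{e-i}$ summands of the form $K_i\otimes t_{e-i,\alpha}$, whose image under $d$ inside $F_{e-1}$ assembles into copies of a truncated Koszul-type fragment $K_i \to K_{i-1} \to \cdots \to K_0$. The image of such a fragment is naturally identified with $\syz^R_i(k)$. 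Matching these contributions across all $i$ and checking against the rank count provided by the Golod Poincar\'e series establishes the $j=0$ case $\syz^R_{e+1}(k)\cong \bigoplus_{i=0}^{e-1} \syz^R_i(k)^{h_{e-i}}$. The statement for $j\geq 1$ then follows by applying $\syz^R_j(-)$: syzygies commute with direct sums up to free summands, $\syz^R_j(\syz^R_i(k))\cong \syz^R_{i+j}(k)$ up to free summands, and the minimality of the resolution rules out spurious free summands on either side.

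The principal obstacle is upgrading the numerical Betti-number equality into a genuine module-level direct-sum decomposition. Showing that the canonical inclusion of $\bigoplus_{i=0}^{e-1} \syz^R_i(k)^{h_{e-i}}$ into $\syz^R_{e+1}(k)$ splits requires a delicate use of the fact that the Koszul complex has length exactly $e$ (the embedding dimension of $R$) together with the trivial-Massey-product structure of the Shamash differential, in order to exhibit explicit splittings of each summand and show that the weight-$\geq 2$ contributions to $F_e$ map into the image of the weight-one piece. This is precisely why the phenomenon appears at the $(e+1)$-st syzygy and not earlier, and it is the substantive content of the theorem beyond the bookkeeping of Poincar\'e series.
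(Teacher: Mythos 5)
First, note that this paper does not prove Theorem \ref{generalGolod} at all: it is quoted from \cite{CDEKPU25}, so the only comparison available is with that external argument, which (like your plan) works with the explicit Golod resolution of $k$. Your overall strategy is therefore the natural one and in the right spirit, but as written it has two genuine problems. The first is that your description of the resolution is incorrect. Over a Golod ring the minimal free resolution of $k$ is the Eagon/Shamash construction whose underlying module is $K_\bullet\otimes_R T(V)$, where $T(V)$ is the \emph{tensor algebra} on the graded vector space $V=\bigoplus_{i\ge 1}H_i(K_\bullet)$ placed in homological degree $i+1$, and whose differential is defined on whole words via a trivial Massey operation; it is not the free divided-power algebra $R\langle t_{i,\alpha}\rangle$ with $d(t_{i,\alpha})=c_{i,\alpha}$. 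The distinction is not cosmetic: the Golod Poincar\'e series $(1+t)^e/(1-\sum_i h_i t^{i+1})$ counts noncommutative words in the $t_{i,\alpha}$, not divided-power monomials (already with two variables of the same even degree there are $4$ words of weight two but only $3$ divided-power monomials), and the differential is not determined by its values on the generators alone. So the complex $F_\bullet$ you build is neither minimal with the correct Betti numbers nor visibly acyclic, and the subsequent weight-by-weight analysis has no correct foundation until this is fixed.

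The second, and more serious, issue is that the actual content of the theorem is left as an acknowledged ``principal obstacle'' rather than proved. Identifying, inside $F_e$, weight-one pieces $K_i\otimes t_{e-i,\alpha}$ whose images resemble truncated Koszul fragments, and checking that the ranks match the Poincar\'e series, does not produce a direct-sum decomposition of $\syz_{e+1}^R(k)$: one must show that these fragments really map onto submodules isomorphic to $\syz_i^R(k)$, that the higher-weight contributions are carried into them compatibly by the Massey-operation part of the differential, and that the resulting inclusion of $\bigoplus_{i=0}^{e-1}\syz_i^R(k)^{h_{e-i}}$ into $\syz_{e+1}^R(k)$ admits a splitting. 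You state that this ``requires a delicate use'' of the length of $K_\bullet$ and the trivial Massey products, which is exactly right, but no such argument is given; this is precisely the substance of \cite[Theorem 1.1]{CDEKPU25}, so the proposal does not yet establish the statement. By contrast, your reduction of the case $j\ge 1$ to $j=0$ is fine, and in fact simpler than you make it: with minimal resolutions, $\syz_j^R(-)$ commutes with finite direct sums on the nose and $\syz_j^R(\syz_i^R(k))\simeq\syz_{i+j}^R(k)$ exactly, so no caveats about free summands are needed.
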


This simple recursive formula shows that higher syzygies of Golod rings are highly decomposable. Thus, the decomposability of $\syz^R_n(k)$ can indicate non-Gorenstein behavior, as in the case of Golod rings.

It is worth noting that special direct summand of modules other than $k$ have also been studied by Lescot \cite{Les85}, Christensen-Striuli-Veliche \cite{CSV10}, Snapp \cite{Sna10}, Ghosh \cite{Gho19}, Dao-Eisenbud \cite{DE22}, among others.

In this paper, we address two problems concerning direct summands and direct sum decompositions of syzygies of the residue field of Noetherian local rings.

In the first problem, we consider local rings which satisfy the following condition:

\medskip
$(\ast)$ A syzygy of the residue field is a direct summand of another syzygy of that field.
\medskip

For convenience, we refer to $(\ast)$ as the {\it syzygy direct summand assumption}.

Examples of local rings satisfying the syzygy direct summand assumption include those for which a syzygy of the residue field contains a simple direct summand. Golod rings satisfy $(\ast)$ as a consequence of Theorem \ref{generalGolod}. In \cite[Proposition 5.10, Theorem 4.1]{DKT20}, Dao, Kobayashi and Takahashi show that if a local ring $R$ is Burch of depth $t$, then $\syz^R_t(k)$ is a direct summand of $\syz^R_{t+2}(k)$. Thus, Burch rings also satisfy the syzygy direct summand assumption $(\ast)$. We will provide further examples of such rings in the next section.

Our aim is to investigate Betti growth, $\Ext$-vanishing, and their relation to the singularity of $R$ under the assumption $(\ast)$. As a first step, we establish a result on the monotonicity of the Betti sequence.

\begin{propositionM}
Let $(R, \fm, k)$ be a Noetherian local ring. Suppose there are non-negative integers $a\not=b$ such that the $a$-th syzygy $\syz_a^R(k)$ is a direct summand of $\syz_b^R(k)$. Then either $a<b$ or $R$ is a hypersurface. 

Suppose that $R$ is not a hypersurface.
\begin{itemize}
\item[(a)] Denote $p=b-a$ and take an integer $0<q\leq b-a$. For a finitely generated $R$-module $M$, the subsequence of Betti numbers $\{\beta_{pn+q}(M)\}_{pn+q> a}$ is non-decreasing.
\item[(b)] $\depth R \le a$.
\end{itemize}
\end{propositionM}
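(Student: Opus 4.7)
The plan is to propagate the direct-summand relation to all higher syzygies, then transfer it to Betti numbers (via $\Tor$) and to depth (via a standard formula for $\depth \syz^R_n(k)$). The guiding principle is that minimal syzygies respect direct sums: if $M \cong N \oplus L$, then $\syz^R_j(M) \cong \syz^R_j(N) \oplus \syz^R_j(L)$, since direct-summing minimal free resolutions of $N$ and $L$ produces a minimal free resolution of $M$. Combined with the identity $\syz^R_j(\syz^R_n(k)) \cong \syz^R_{n+j}(k)$ (the truncation of the minimal resolution of $k$ past position $n$ is a minimal resolution of $\syz^R_n(k)$), the hypothesis immediately upgrades to the propagated relation $\syz^R_{a+j}(k) \mid \syz^R_{b+j}(k)$ for every $j \ge 0$.

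For the dichotomy, assume $a > b$ and set $d := a - b > 0$. Writing $\syz^R_b(k) = \syz^R_a(k) \oplus N$ and counting minimal generators yields
\[
 \beta^R_{b+j}(k) \;=\; \mu(\syz^R_{a+j}(k)) + \mu(\syz^R_j(N)) \;\ge\; \beta^R_{a+j}(k) \;=\; \beta^R_{b+j+d}(k)
\]
for every $j \ge 0$, and iterating gives $\beta^R_n(k) \le \max_{b \le i < b+d} \beta^R_i(k)$ for all $n \ge b$. Thus the Betti sequence of $k$ is bounded, and by a classical theorem of Tate and Gulliksen this forces $R$ to be a hypersurface.

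Now assume $R$ is not a hypersurface, so $a < b$ and $p = b - a > 0$. For part (a) I use the dimension-shift isomorphism $\Tor^R_1(M, \syz^R_n(k)) \cong \Tor^R_{n+1}(M, k)$ obtained from the short exact sequences $0 \to \syz^R_{n+1}(k) \to F_n \to \syz^R_n(k) \to 0$ in the minimal resolution of $k$. Additivity of $\Tor$ on direct summands, applied to $\syz^R_{a+j}(k) \mid \syz^R_{b+j}(k)$, then yields $\beta^R_{a+j+1}(M) \le \beta^R_{b+j+1}(M)$ for every $j \ge 0$ and every finitely generated $M$. Setting $m = a+j+1$, this reads $\beta^R_m(M) \le \beta^R_{m+p}(M)$ for all $m > a$, which specialized to $m = pn+q$ with $0 < q \le p$ gives the stated monotone subsequence.

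For part (b) I appeal to the formula $\depth \syz^R_n(k) = \min(n, \depth R)$, valid whenever $\syz^R_n(k) \ne 0$; it is proved by induction on $n$ using the depth lemma on $0 \to \syz^R_n(k) \to F_{n-1} \to \syz^R_{n-1}(k) \to 0$ for the lower bound, and an $\Ext^i(k, -)$ computation for the upper bound. Since $\depth(A \oplus B) = \min(\depth A, \depth B)$, the hypothesis forces $\min(a, \depth R) \ge \min(b, \depth R)$, and because $a < b$ this gives $\depth R \le a$. (If $\syz^R_b(k) = 0$, so $R$ is regular, then $a > \dim R = \depth R$ is immediate.) The only real technical obstacle throughout is bookkeeping: ensuring no spurious free summands arise when iterating syzygies, which over non-regular $R$ is guaranteed by Dutta's theorem \cite{Dut89}, and handling the regular case separately. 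The two non-trivial external inputs are the boundedness-versus-hypersurface theorem of Tate and Gulliksen and the depth formula above; no new commutative algebra is required.
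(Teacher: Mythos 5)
Your argument is correct. The dichotomy and part (a) are essentially the paper's own proof: both rest on the dimension-shift $\Tor^R_{i}(M,\syz^R_n(k))\cong\Tor^R_{i+n}(M,k)$ ($i\ge 1$) together with additivity of $\Tor$ over the summand decomposition, and both dispose of the case $a>b$ by showing the Betti sequence of $k$ would be bounded and invoking the classical fact that a non-hypersurface has unbounded Betti numbers of $k$ (the paper cites extremality of $k$ after Avramov; your Tate--Gulliksen reference serves the same purpose). Where you genuinely diverge is part (b). The paper avoids computing depths of syzygies of $k$ beyond the depth-lemma lower bound $\depth\syz^R_b(k)\ge\min\{\depth R,b\}$: it instead applies part (a) to the test module $R/(\underline{x})$, $\underline{x}$ a maximal regular sequence, whose Betti numbers $\binom{t}{i}$ are explicitly known and vanish past $t=\depth R$, so monotonicity of the subsequence through degree $t$ forces $t\le a$. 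You argue via the formula $\depth\syz^R_n(k)=\min\{n,\depth R\}$ and $\depth(A\oplus B)=\min\{\depth A,\depth B\}$, which also works; note, however, that your proof sketch of that formula (depth lemma plus an $\Ext^i(k,-)$ computation) is only immediate in the range $n\le\depth R$ --- for $n>\depth R$ the upper bound needs a further argument, e.g.\ cutting by a regular sequence and finding a socle element in the reduced syzygy. This is harmless for your purposes, since the contradiction with $\depth R>a$ only uses the exact value $\depth\syz^R_a(k)=a$ (with $a<\depth R$, the easy range) and the lower bound at $b$; but as written the quoted formula carries more weight than your sketch justifies. Your aside about ``spurious free summands'' and Dutta's theorem is unnecessary: syzygies taken from minimal free resolutions behave additively on direct sums with no such caveat, and the regular case is already excluded once $R$ is assumed not to be a hypersurface.
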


Thus, the Betti sequence of any finitely generated $R$-module is periodically non-decreasing. Avramov \cite[Problem 4.3.3]{Avr98} asked whether the Betti sequence of any finitely generated module over local rings is eventually non-decreasing. In Theorem \ref{firstTachikawa}, we give an affirmative answer to a weaker version of this question in a particular case.

Proposition \ref{monotonicity} allows us to confirm Tachikawa conjecture for all Cohen-Macaulay local rings that satisfy the syzygy direct summand assumption $(\ast)$.

\begin{theoremT}
Let $(R, \fm, k)$ be a Cohen-Macaulay local ring with a canonical module $K_R$. Suppose there are non-negative integers $a\not= b$ such that the $a$-th syzygy $\syz_a^R(k)$ is a direct summand of $\syz_b^R(k)$. If $\Ext^i_R(K_R, R)=0$ for all $i>0$ then $R$ is a hypersurface.
\end{theoremT}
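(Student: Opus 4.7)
The plan is to argue by contradiction: suppose $R$ is not a hypersurface. Proposition~\ref{monotonicity} then yields $a < b$, and, setting $p := b - a$, we have $d := \dim R = \depth R \leq a$ while the Betti sequence of every finitely generated $R$-module is eventually non-decreasing on each arithmetic progression of step $p$ beyond index $a$. The argument splits into two cases according to whether $R$ is Gorenstein.

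Assume first that $R$ is not Gorenstein. Since $\Ext_R^i(K_R, R) = 0$ for $i > 0$, dualizing a minimal free resolution $F_\bullet \to K_R$ produces an exact complex
$$
0 \to K_R^{*} \to F_0^{*} \to F_1^{*} \to F_2^{*} \to \cdots
$$
with $K_R^{*} := \Hom_R(K_R, R)$ and $F_i^{*} \cong R^{\beta_i(K_R)}$. The non-Gorenstein hypothesis rules out any surjection $K_R \twoheadrightarrow R$: such a surjection would split as $K_R \cong R \oplus N$, and applying $\Ext^d_R(k, -)$ together with the identity $\Ext^d_R(k, K_R) = k$ would force the type of $R$ to equal $1$. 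Hence $K_R^{*} \subseteq \fm F_0^{*}$, so the complex above is minimal. Setting $C^n := \Cok(F_{n-1}^{*} \to F_n^{*})$, iteration of the $\Tor$ long exact sequences for $0 \to C^{n-1} \to F_n^{*} \to C^n \to 0$ together with minimality gives
$$
\beta_i(C^n) = \begin{cases} \beta_{n-i}(K_R), & 0 \leq i \leq n, \\ \beta_{i-n-1}(K_R^{*}), & i \geq n+1. \end{cases}
$$

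Proposition~\ref{monotonicity} applied to $K_R$ yields $\beta_m(K_R) \leq \beta_{m+p}(K_R)$ for $m > a$, while the same proposition applied to $C^n$ for indices in the range $a < i \leq n - p$ (where the Betti numbers of $C^n$ are reversed Betti numbers of $K_R$) forces $\beta_{n-i}(K_R) \leq \beta_{n-i-p}(K_R)$, i.e., the opposite inequality for $m = n - i - p$. Varying $n$ to cover all residues modulo $p$, we deduce $\beta_m(K_R) = \beta_{m+p}(K_R)$ for all sufficiently large $m$; thus $K_R$ has eventually periodic and, in particular, bounded Betti numbers. Because $K_R$ is not free its projective dimension is infinite, so $K_R$ has complexity exactly $1$. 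By a theorem of Gulliksen, the existence of a finitely generated module of finite positive complexity forces $R$ to be a complete intersection; since complete intersections are Gorenstein, this contradicts our standing hypothesis.

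In the remaining case $R$ is Gorenstein but not a hypersurface, so $K_R \cong R$ and the Tachikawa hypothesis is automatic. Reducing modulo a maximal $R$-regular sequence $\underline{x}$ chosen in $\fm \setminus \fm^2$ yields an Artinian Gorenstein ring $\bar R := R/\underline{x}$. For $n \geq d$ the syzygy $\syz_n^R(k)$ is maximal Cohen--Macaulay and $\underline{x}$-regular, so a standard change-of-rings argument transfers the relation $\syz_a^R(k) \mid \syz_b^R(k)$ to a direct summand relation between corresponding syzygies of $k$ over $\bar R$. Over the Artinian Gorenstein ring $\bar R$ the syzygies of $k$ are indecomposable by \cite[Lemma~8.17]{Yos90}; the descended relation therefore forces either $\syz_a^{\bar R}(k) = 0$, making $\bar R$ and thus $R$ regular, or $\syz_a^{\bar R}(k) \cong \syz_b^{\bar R}(k)$, so that $k$ has an eventually periodic resolution over $\bar R$ and hence $\bar R$ (and $R$) is a hypersurface by Gulliksen's periodicity theorem. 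Either conclusion contradicts our standing assumption. The principal technical obstacles will be the $\Tor$-computation identifying the Betti numbers of $C^n$, the combinatorial extraction of periodicity of $\{\beta_m(K_R)\}$ from monotonicity across the family $\{C^n\}_{n \geq 0}$, and the careful tracking of the syzygy direct summand relation through the change of rings in the Gorenstein subcase.
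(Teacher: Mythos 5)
Your reduction to the case $a<b$, your observation that the dualized minimal free resolution of $K_R$ is a minimal exact complex when $R$ is not Gorenstein, and your extraction of the reverse inequality $\beta_{m+p}(K_R)\le\beta_m(K_R)$ from the modules $C^n$ all parallel the paper's argument (which packages these facts in Lemma~\ref{resolution} and Lemma~\ref{BettisequenceSyzygy} without a Gorenstein case split). Your Gorenstein subcase is essentially a restatement of Proposition~\ref{p29} and is fine. The problem is the endgame of the non-Gorenstein case.

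You conclude from the bounded Betti sequence of $K_R$ that $K_R$ has complexity~$1$, and then invoke ``a theorem of Gulliksen'' to say that a finitely generated module of finite positive complexity forces $R$ to be a complete intersection. That is not what Gulliksen proved. Gulliksen's theorem is that $R$ is a complete intersection if and only if the \emph{residue field} $k$ has finite complexity; it says nothing about arbitrary modules. Indeed there exist local rings that are far from complete intersections and yet admit nonfree modules with bounded (even constant) Betti numbers, as in the work of Gasharov--Peeva and of Jorgensen--\c{S}ega. So the implication ``some module has complexity $1\Rightarrow R$ is CI'' is simply false, and the non-Gorenstein case of your argument collapses at exactly this step. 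In fact the paper's Theorem~\ref{secondTachikawa} is precisely the statement ``bounded Betti numbers of $K_R$ plus $(\ast)$ imply hypersurface,'' and its proof is not a one-liner: the question of whether bounded Betti numbers of $K_R$ alone force Gorensteinness is related to the open question of Christensen--Striuli--Veliche \cite{CSV10} discussed in the paper.

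What you are missing is the $\Tor$-vanishing trick. Write $\syz_b^R(k)\simeq\syz_a^R(k)\oplus N$. Once you know the Betti sequence of $K_R$ is eventually constant, the identity
$\beta_{p(n+1)+q}(K_R)=\beta_{pn+q}(K_R)+\dim_k\Tor_{pn-a+q}^R(N,K_R)$
from the proof of Proposition~\ref{monotonicity} gives $\Tor_m^R(N,K_R)=0$ for all $m>0$. Tensoring a truncation of a minimal free resolution of $N$ with $K_R$ and then applying $\Hom_R(-,K_R)$ shows that $\Ext_R^m(\syz_d^R(N),R)=0$ for all $m>0$ (here $d=\dim R$). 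Now repeat the whole boundedness argument with $\syz_d^R(N)$ in place of $K_R$ to get a bounded Betti sequence for $\syz_d^R(N)$. But $\syz_d^R(N)$ is a direct summand of a syzygy of $k$, hence a quotient of a syzygy of $k$, and by Avramov's extremality theorem such a module has unbounded Betti sequence unless it is zero. Therefore $\syz_d^R(N)=0$, so $N$ has finite projective dimension, so $\syz_{b+d}^R(k)\cong\syz_{a+d}^R(k)$, the minimal free resolution of $k$ is eventually periodic, and $R$ is a hypersurface. Replacing your Gulliksen step by this argument closes the gap, and it also makes your Gorenstein/non-Gorenstein case split unnecessary.
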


The second problem concerns the converse of Theorem \ref{generalGolod}, namely, whether the decomposition (\ref{generalGolod22}) of the syzygy $\syz^R_{e+1}(k)$ characterizes the Golod property of the ring $R$. We prove that this is indeed the case.

\begin{theoremG}
Let $(R,\fm,k)$ be a Noetherian local ring of embedding dimension $e$ and codepth $c:=e-\depth(R)$, and let $K_{\bullet}$ be the Koszul complex on a minimal set of generators of $\fm$.
Put $\rh_i(R)=\dim_k \mathrm{H}_i(K_\bullet)$.
The following statements are equivalent.
\begin{itemize}
\item[(a)] The ring $R$ is Golod.
\item[(b)] There is an isomorphism
$$\syz_{e+1}^R(k) \simeq \bigoplus_{j = 1}^{c} \syz_{e-j}^R(k)^{\rh_{j}(R)}.$$
\item[(c)] For each $m\ge 0$, there are isomorphisms
$$\syz_{e+1+m}^R(k) \simeq \bigoplus_{j=1}^{c} \syz_{e-j+m}^R(k)^{\rh_{j}(R)}.$$
\end{itemize}
\end{theoremG}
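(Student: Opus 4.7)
The proof proceeds by establishing the cycle (a) $\Rightarrow$ (c) $\Rightarrow$ (b) $\Rightarrow$ (a), with the equivalence (b) $\Leftrightarrow$ (c) handled along the way. Three of the four implications are essentially routine. First, (a) $\Rightarrow$ (c) is an immediate reinterpretation of Theorem \ref{generalGolod}: substituting $j = e - i$ and noting $\rh_j(R) = 0$ for $j > c$, the decomposition displayed there matches the one in (c). Next, (c) $\Rightarrow$ (b) is trivial (take $m = 0$). For (b) $\Rightarrow$ (c), I would apply the minimal $m$-th syzygy functor $\syz^R_m(-)$ to both sides of the isomorphism in (b); since minimal syzygies of $k$ carry no nonzero free summands, $\syz^R_m$ commutes with finite direct sums and satisfies $\syz^R_m \circ \syz^R_n = \syz^R_{m+n}$, which immediately yields (c).

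The crux is (c) $\Rightarrow$ (a). First I would extract the numerical consequence of (c) by comparing the minimal numbers of generators on both sides to obtain the Betti recurrence
$$\beta^R_n(k) \;=\; \sum_{j=1}^{c} \rh_j(R)\, \beta^R_{n-j-1}(k) \qquad \text{for all } n \geq e+1.$$
Equivalently, the Poincar\'e series takes the form $P^R_k(t) = Q(t)/\bigl(1 - \sum_{j=1}^c \rh_j(R) t^{j+1}\bigr)$ for some polynomial $Q(t) \in \bZ[t]$ of degree at most $e$. Using the universal identities $\beta_0(k) = 1$, $\beta_1(k) = e$, and $\beta_2(k) = \binom{e}{2} + \rh_1(R)$, the first three coefficients $Q_0, Q_1, Q_2$ already agree with those of $(1+t)^e$. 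Combining with the Serre--Avramov upper bound $P^R_k(t) \preceq (1+t)^e/\bigl(1 - \sum_{j=1}^c \rh_j(R) t^{j+1}\bigr)$, which gives the positivity $\bigl[(1+t)^e - Q(t)\bigr]/\bigl(1 - \sum_j \rh_j(R) t^{j+1}\bigr) \in \bZ_{\geq 0}[[t]]$, the target is to conclude $Q(t) = (1+t)^e$, and hence $P^R_k(t) = (1+t)^e/\bigl(1 - \sum_j \rh_j(R) t^{j+1}\bigr)$, which is the classical Poincar\'e-series characterization of Golod rings.

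The main obstacle is identifying $Q_n = \binom{e}{n}$ in the middle range $3 \leq n \leq e$. The Betti recurrence holds only for $n \geq e+1$ and does not propagate backwards, while the Serre--Avramov positivity by itself admits polynomials $Q(t) \neq (1+t)^e$ producing valid non-negative quotients. Closing this gap requires exploiting the module-level content of the isomorphism in (c), not merely the Betti number equality, for instance by applying further functors (such as $\Ext^*_R(-, M)$ or $- \otimes_R M$ for suitable $M$) to both sides of (c), or by matching the decomposition against the Tate/Golod construction of the minimal resolution built from $K_\bullet$. The structural techniques of Cuong--Dao--Eisenbud--Kobayashi--Polini--Ulrich \cite{CDEKPU25} are expected to provide the right input here.
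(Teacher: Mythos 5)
Your implications (a)$\Rightarrow$(c)$\Rightarrow$(b) and (b)$\Rightarrow$(c) are fine and match the easy part of the paper's argument (minimal resolutions of direct sums are direct sums of minimal resolutions, so applying $\syz_m^R(-)$ to (b) gives (c)). But the crux, (c)$\Rightarrow$(a), is not proved: you extract only the Betti-number recurrence $\beta_n^R(k)=\sum_{j=1}^c\rh_j(R)\beta_{n-j-1}^R(k)$ for $n\ge e+1$, observe correctly that together with Serre's coefficientwise bound this does not force $Q(t)=(1+t)^e$, and then stop at the point where the real work begins, saying that ``module-level content'' and ``the techniques of \cite{CDEKPU25} are expected to provide the right input.'' That is precisely the missing idea, and it is the heart of the theorem; as stated, your argument establishes Golodness only in degrees $n\ge e+1$ and leaves the range $3\le n\le e$ open.

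The paper closes this gap by a mechanism you do not have. First it reduces to depth zero: choosing a maximal $R$-regular sequence in $\fm\setminus\fm^2$ and passing to $S=R/(\underline{x})$, using Proposition \ref{p31} (Golodness is preserved), Corollary \ref{c26} (how syzygies of $k$ decompose modulo $\underline{x}$) and Lemma \ref{l36} (invariance of the $\rh_j$). This reduction is not cosmetic: the key induction step needs $\rh_e(S)\ne 0$, which holds exactly because $\depth S=0$ (Lemma \ref{l31}, Lemma \ref{l32}(4), via socles). Second, instead of applying $\Ext$ or $-\otimes M$, it applies Koszul homology $\mathrm{H}_n(\underline{x};-)$ to both sides of the reduced isomorphism. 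The point is that the dimensions $\rh_m(\syz_n^Sk)$ are computable: Lemma \ref{l32} gives inequalities linking them to Betti numbers, Lemma \ref{l35} and Proposition \ref{l37} compute them in low syzygy degree and in high Koszul degree, and Corollary \ref{cor32} shows that equality in Serre's bound in degree $n$ is equivalent to a chain of equalities among the $\rh_i(\syz_{n-i}^Sk)$. This converts the isomorphism (c) into the numerical relations denoted $(\mathrm{H}_{n,l}^S)$, and Proposition \ref{p39a} then runs a descending induction: knowing the Golod equalities $(\mathrm{B}_m^S)$ for $m\ge n+1$ plus $(\mathrm{H}_{n,0}^S)$ forces $(\mathrm{B}_n^S)$, the division by $\rh_e(S)\ne 0$ being where depth zero enters. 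Descending from $n=c+1$ to $n=0$ and invoking the numerical criterion of Lemma \ref{l26} yields Golodness of $S$, hence of $R$. So your diagnosis of the obstacle is accurate, but the proposal supplies no substitute for this Koszul-homology bookkeeping and induction, and without it the implication (c)$\Rightarrow$(a) remains unproved.
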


The paper is organized into three sections. In Section 2, we provide several examples of local rings that satisfy the syzygy direct summand assumption $(\ast)$. In particular, Burch rings, Golod rings, and non-trivial fibre products of local rings satisfy this assumption. We prove Proposition \ref{monotonicity} and Theorem \ref{firstTachikawa} in this section. Section 3 is devoted to the proof of Theorem \ref{characterizationGolod}.

\bigskip
\noindent{\bf Acknowledgment.}  The authors 
thank Ryo Takahashi for helpful discussions during the preparation of this manuscript. The second author wishes to express his gratitude to the Institute of Mathematics, Vietnam Academy of Science and Technology (IMVAST), for its hospitality and support through the Simons Foundation Targeted Grant for the IMVAST (No. 558672), and to the International Center for Research and Training in Mathematics under the auspices of UNESCO (ICRTM-IMVAST) during his visit to the institute.


\section{Special syzygy direct summands, Betti growth, (co)homology vanishing and singularity}

Throughout this paper, $(R, \fm, k)$ denotes a Noetherian local ring. We denote by $e$ the embbedding dimension $\dim_k(\fm/\fm^2)$. 

In this section, we present several classes of Noetherian local rings that satisfy the syzygy direct summand assumption $(\ast)$. We then discuss the problem of the growth of Betti numbers of a module over such rings and its application to Tachikawa conjecture.

The first important class consists of Noetherian local rings in which some syzygy of the residue field contains a simple direct summand. Such rings are called non-exceptional. If $\syz_n^R(k)$ has a simple direct summand for some $n>0$, then $\syz_0^R(k)\simeq k$ is a direct summand of $\syz_n^R(k)$. Rings of this type include Burch rings of depth zero, as shown by Dao-Kobayashi-Takahashi \cite[Theorem 4.1]{DKT20}. Since Teter rings are Burch, they also belong to this class. For more on Teter rings, we refer to the works of Teter \cite{Tet74} and Huneke-Vraciu \cite{HV06}.

The second class consists of Burch rings, defined by Dao-Kobayashi-Takahashi \cite{DKT20}. It is shown that if $R$ is a Burch ring of depth $t$ then $\syz^R_t(k)$ is a direct summand of $\syz^R_{t+2}(k)$ (see \cite[Proposition 5.10, Theorem 4.1]{DKT20}). On the other hand, not every ring satisfying the assumption $(\ast)$ is a Burch ring. For instance, consider the ring 
$$R = k[[x, z]] / (x^4, x^2z^2, z^4),$$
where $k$ is a field. According to \cite[Examples 4.6 and 5.7]{DKT20}, the residue field $k$ is a direct summand of $\syz_3^R(k)$ but not of $\syz_2^R(k)$. This implies that $R$ is not a Burch ring, even though it satisfies the assumption $(\ast)$.

The third class consists of Golod rings. Let $\underline{x}$ be a minimal set of generators of the maximal ideal $\fm$. For a finitely generated $R$-module $M$, let $\mathrm{H}_i(\underline{x}; M)$ denote the $i$-th Koszul homology of $M$ with respect to $\underline{x}$, and define 
$$\rh_i^R(M)=\dim_k\mathrm{H}_i(\underline{x}; M).$$
Note that $\rh_i^R(M)$ is independent of the choice of $\underline{x}$. When the context is clear, we write simply $\rh_i(M)$. The Poincar\'e series of $k$ over $R$ is $P^R_k(t):=\sum_{i=0}^\infty\beta_R^i(k)t^i$. The following term-wise inequality for the Poincar\'e series is due to Serre.

\begin{equation}\label{equ21}
P^R_k(t)\le \frac{(1+t)^{e}}{1-t\sum_{j=1}^{e-\depth R}\rh_j(R)t^j}.
\end{equation}

\begin{definition}
We say that $R$ is \emph{Golod} if in \eqref{equ21} the equality holds.
\end{definition}

Golod rings satisfy the assumption $(\ast)$ as a consequence of Theorem \ref{generalGolod}.

Following Lescot \cite[Definition 3.1]{Les85}, a Noetherian local ring is called an \emph{exceptional local ring} if no syzygy of the residue field has a simple direct summand. An example of exceptional ring is given in \cite[Examples 3.8(2)]{Les85}. In contrast, Burch rings of depth zero and Golod rings of depth zero are not exceptional. Using fibre product of local rings, one can construct many examples of exceptional local rings that nevertheless satisfy the syzygy direct summand assumption $(\ast)$.

Let $(S,\fm_S,k)$ and $(T,\fm_T,k)$ be Noetherian local rings with the same residue field $k$. Denote by $\pi_S\colon S \to k$ and $\pi_T\colon T \to k$ the canonical surjections. The \emph{fibre product} of $S$ and $T$ over $k$ is the local ring
\[
R:=S \times_k T:=\{(a,b)\in S\times T \mid \pi_S(a)=\pi_T(b)\}.
\]
Assume $S\not=k \not=T$.  Then the maximal ideal $\fm_R$ decomposes into a direct sum of $R$-modules $\fm_R=\fm_S\oplus \fm_T$. Moreover, for each integer $n\ge 2$, there is a decomposition
\[
\syz_n^Rk \cong \left(\bigoplus_{i=1}^n\syz_i^Sk^{\oplus c_i}\right)\oplus \left(\bigoplus_{i=1}^n\syz_i^Tk^{\oplus d_i}\right)\cong \fm_R\oplus \cdots
\]
for some positive integers $c_1,\dots,c_n$ and $d_1,\dots,d_n$; see Nasseh-Takahashi \cite[Lemma 3.2]{NT20}. Hence $R$ satisfies the syzygy direct summand assumption $(\ast)$. Furthermore, $R$ is exceptional if and only if both $S$ and $T$ are exceptional.

There exist local rings that satisfy the syzygy direct summand assumption and are neither Golod nor non-trivial fibre products. Likewise, there are examples of exceptional local rings that satisfy the syzygy direct summand assumption as well.

\begin{example}
Let $k$ be a field.
Let $S=k[[x,y]]/(x^2,y^2)$, $T=k[[z,w]]/(z^2,w^2)$, and $R=S\times_k T\cong k[[x,y,z,w]]/(x^2,y^2,xz,xw,yz,yw,z^2,w^2)$.
The rings $S$ and $T$ are Gorenstein and are not hypersurfaces. Thus $R$ is exceptional and satisfies the syzygy direct summand assumption $(\ast)$.
\end{example}

\begin{example}
Let $R=\mathbb{Q}[[x,y,z]]/(x^3,y^3,z^3,xy,xz^2)$.
The ring $R$ is neither Golod nor a nontrivial fibre product. 
Computations by Macaulay2 show that $k$ is not a direct summand of $\syz_2^Rk$, but of $\syz_3^R k$.
\end{example}

\begin{example}
Let $R=\mathbb{Q}[[x,y,z]]/(x^3,y^3,z^3,x^2y,yz^2)$.
The ring $R$ is neither Golod nor a nontrivial fibre product.
Computations by Macaulay2 show that $k$ is not a direct summand of $\syz_2^Rk$ or $\syz_3^Rk$, but of $\syz_4^R k$.
\end{example}

Local rings satisfying the syzygy direct summand assumption $(\ast)$ possess interesting homological properties. To explore these properties, we recall a lemma on the behavior of syzygies under hyperplane sections (see \cite[Corollary 5.3]{Tak06}).

\begin{lemma}\label{l27}
Let $x\in \fm \setminus \fm^2$ be a non-zero divisor on $R$. For each integer $i\ge 1$, there is an isomorphism of $R/xR$-modules
\[
\syz_i^R k/x\syz_i^R k \cong \syz_i^{R/xR} k \oplus \syz_{i-1}^{R/xR} k.
\]
\end{lemma}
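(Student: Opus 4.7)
The plan is to rephrase the statement in terms of the module $\fm$, using the standard identity $\syz_i^R(k) \cong \syz_{i-1}^R(\fm)$ for $i \ge 1$ (and analogously over $R/xR$). The goal then becomes
\[
\syz_{i-1}^R(\fm)/x\,\syz_{i-1}^R(\fm) \cong \syz_{i-1}^{R/xR}(k) \oplus \syz_{i-1}^{R/xR}(\fm/xR),
\]
and I would split the argument into two steps: \textbf{(i)} identify the left-hand side with $\syz_{i-1}^{R/xR}(\fm/x\fm)$ by reducing a minimal $R$-free resolution of $\fm$ modulo $x$, and \textbf{(ii)} show that $\fm/x\fm$ decomposes as $k \oplus (\fm/xR)$ as an $R/xR$-module.

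For step (i), fix a minimal $R$-free resolution $P_\bullet \to \fm$. Every syzygy $\syz_j^R(\fm)$ sits inside a free $R$-module, so $x$ is a nonzerodivisor on it; in particular $\Tor_1^R(\fm, R/xR) = (0:_\fm x) = 0$, and $\Tor_j^R(\fm, R/xR) = 0$ for $j \ge 2$ because $R/xR$ has projective dimension $1$ over $R$. Therefore $P_\bullet \otimes_R R/xR$ is a free $R/xR$-resolution of $\fm/x\fm$. Since the differentials of $P_\bullet$ have entries in $\fm$, their reductions have entries in $\fm/xR$, so the resolution remains minimal, yielding $\syz_{i-1}^R(\fm)/x\syz_{i-1}^R(\fm) \cong \syz_{i-1}^{R/xR}(\fm/x\fm)$.

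For step (ii), the canonical surjection $\fm/x\fm \twoheadrightarrow \fm/xR$ has kernel $xR/x\fm$, which is isomorphic to $k$ via $rx + x\fm \mapsto \bar r$ (well-defined because $x$ is a nonzerodivisor on $R$). To split the resulting sequence $0 \to k \to \fm/x\fm \to \fm/xR \to 0$, I would invoke the assumption $x \notin \fm^2$: extend $\{x\}$ to a minimal generating set $x, y_1, \dots, y_{e-1}$ of $\fm$ and take the $k$-linear functional $\fm/\fm^2 \to k$ sending the class of $x$ to $1$ and the classes of $y_j$ to $0$. This functional is automatically $R$-linear because $\fm$ annihilates $k$, and precomposing it with $\fm \twoheadrightarrow \fm/\fm^2$ gives an $R$-linear map $\fm \to k$ that kills $x\fm \subseteq \fm^2$; it descends to a retraction $\fm/x\fm \to k$, establishing $\fm/x\fm \cong k \oplus (\fm/xR)$. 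Combined with step (i), the additivity of syzygies over direct sums (minimal covers of a direct sum decompose correspondingly), and the identity $\fm/xR = \syz_1^{R/xR}(k)$, this delivers the desired isomorphism.

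The main obstacle is the splitting in step (ii); this is the only place the hypothesis $x \notin \fm^2$ is used, and without it the sequence $0 \to k \to \fm/x\fm \to \fm/xR \to 0$ need not split, so the rest of the argument would collapse.
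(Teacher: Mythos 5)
Your proof is correct: reducing a minimal $R$-free resolution of $\fm$ modulo $x$ (valid since $x$ is regular on $\fm$ and on every syzygy) and splitting $0\to k\to \fm/x\fm\to \fm/xR\to 0$ via a functional through $\fm/\fm^2$, which is exactly where $x\notin\fm^2$ enters, yields the claimed decomposition. The paper gives no proof of this lemma but cites Takahashi \cite[Corollary 5.3]{Tak06}, and your argument is essentially the standard one underlying that reference, so there is nothing to add.
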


We denote by $\binom{n}{m}$ the binomial coefficient, with the convention that $\binom{n}{m}=0$ if $m<0$ or $m>n$. The following corollary is a straight forward consequence of Lemma \ref{l27}.

\begin{corollary} \label{c26}
Let $\underline{x}=x_1,\dots,x_t$ be  an $R$-sequence contained in $\fm\setminus \fm^2$.
Then for each $i\ge t$, there is an isomorphism of $R/\underline{x}R$-modules
\begin{equation*} 
\syz_i^R k/\underline{x}\syz_i^R k \cong \bigoplus_{j=0}^t\syz_{i-j}^{R/\underline{x}R} k^{\oplus \binom{t}{j}}.
\end{equation*}
\end{corollary}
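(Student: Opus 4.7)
The plan is to proceed by induction on the length $t$ of the regular sequence. The base case $t=1$ is immediate from Lemma \ref{l27}, since the right-hand side of the claimed formula then reads $\syz_i^{R/x_1 R}k \oplus \syz_{i-1}^{R/x_1 R}k$ (corresponding to $\binom{1}{0}=\binom{1}{1}=1$).

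For the inductive step, set $R_1 = R/x_1 R$ and $R' = R/\underline{x}R$, and apply Lemma \ref{l27} to the nonzerodivisor $x_1 \in \fm\setminus\fm^2$ to obtain the $R_1$-isomorphism
$$\syz_i^R k / x_1 \syz_i^R k \;\cong\; \syz_i^{R_1} k \oplus \syz_{i-1}^{R_1} k.$$
Quotient both sides by the ideal generated by $x_2, \ldots, x_t$: the left-hand side becomes $\syz_i^R k / \underline{x}\syz_i^R k$, while the right-hand side becomes a direct sum of two modules of the type covered by the induction hypothesis, now applied to the $R_1$-regular sequence $x_2, \ldots, x_t$ of length $t-1$ (using that $i-1 \ge t-1$). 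The result is
$$\bigoplus_{j=0}^{t-1}\syz_{i-j}^{R'} k^{\oplus \binom{t-1}{j}} \;\oplus\; \bigoplus_{j=0}^{t-1}\syz_{i-1-j}^{R'} k^{\oplus \binom{t-1}{j}}.$$
Re-indexing the second direct sum by $j \mapsto j+1$ and collecting terms of the same syzygy index, the coefficient of $\syz_{i-j}^{R'}k$ becomes $\binom{t-1}{j} + \binom{t-1}{j-1} = \binom{t}{j}$ by Pascal's identity, yielding the claimed formula.

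The main (and rather minor) obstacle is ensuring that the images of $x_2, \ldots, x_t$ in $R_1$ still lie in $\fm_{R_1} \setminus \fm_{R_1}^2$, so that Lemma \ref{l27} continues to apply at the next step of the induction. This is automatic once one reads the hypothesis "an $R$-sequence contained in $\fm\setminus\fm^2$" as meaning $x_1,\ldots,x_t$ is part of a minimal generating set of $\fm$ (equivalently, the $x_i$ are linearly independent modulo $\fm^2$), which is the natural interpretation. Beyond that point, the argument is purely a matter of combining quotients and invoking Pascal's rule.
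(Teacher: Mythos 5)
Your proposal is correct and is precisely the iteration of Lemma \ref{l27} (combined with Pascal's identity) that the paper intends when it calls the corollary a straightforward consequence of that lemma, so there is nothing essentially different here. Your caveat is also well placed: for the induction to run one genuinely needs the images of $x_2,\dots,x_t$ to stay outside the square of the maximal ideal of $R/x_1R$, i.e.\ the sequence should be linearly independent modulo $\fm^2$ (part of a minimal generating set of $\fm$), which is the reading under which the paper applies the corollary.
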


The next proposition shows that the syzygy direct summand assumption $(\ast)$ and the Gorenstein property are quite different.

\begin{proposition}  \label{p29}
A Gorenstein local ring satisfies the syzygy direct summand assumption $(\ast)$ if and only if it is a hypersurface.
\end{proposition}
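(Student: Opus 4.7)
My plan is to treat the two directions of the equivalence separately, since each reduces to a result already available in the paper or in the classical literature. The forward direction — that a Gorenstein local ring satisfying $(\ast)$ must be a hypersurface — will be an immediate application of Theorem \ref{firstTachikawa}, stated just above. The converse will follow from the classical $2$-periodicity of minimal resolutions over hypersurfaces.

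For the forward direction, I will invoke Theorem \ref{firstTachikawa} directly. A Gorenstein local ring is Cohen-Macaulay and its canonical module $K_R$ is isomorphic to $R$ itself, so $\Ext^i_R(K_R, R) \cong \Ext^i_R(R, R) = 0$ for every $i > 0$. Hence the $\Ext$-vanishing hypothesis of Theorem \ref{firstTachikawa} is automatic, and together with the given assumption $(\ast)$ the theorem delivers that $R$ is a hypersurface.

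For the converse, I will assume that $R$ is a hypersurface of dimension $d$. Every such $R$ is Gorenstein, so only $(\ast)$ needs to be verified. By Eisenbud's matrix factorization theorem, every maximal Cohen-Macaulay module over a hypersurface has an eventually $2$-periodic minimal free resolution; since $\syz_n^R(k)$ is maximal Cohen-Macaulay for all $n \geq d$, we obtain $\syz_n^R(k) \cong \syz_{n+2}^R(k)$ for $n$ sufficiently large, which exhibits $(\ast)$ with $a = n$ and $b = n+2$. The regular case is handled trivially, since high syzygies of $k$ vanish. I anticipate no real technical obstacle here: the proposition is essentially a clean corollary of Theorem \ref{firstTachikawa} paired with the standard structure theory of maximal Cohen-Macaulay modules over hypersurfaces, and the only conceptual point to register is how cleanly the $\Ext$-vanishing hypothesis of Theorem \ref{firstTachikawa} trivializes when $K_R = R$.
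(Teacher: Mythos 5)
Your proof is correct, but it runs in the opposite logical direction from the paper and depends on substantially heavier machinery. A small factual slip first: Theorem \ref{firstTachikawa} is stated \emph{after} Proposition \ref{p29} in the paper, not ``just above'' as you write; the paper's exposition order is Proposition \ref{p29} $\to$ Proposition \ref{monotonicity} $\to$ Theorem \ref{firstTachikawa}. Your derivation is nevertheless non-circular, because the proof of Theorem \ref{firstTachikawa} relies only on Proposition \ref{monotonicity}, the dual-resolution machinery of Lemmas \ref{resolution}--\ref{BettisequenceSyzygy}, and Avramov's extremality theorem, none of which invoke Proposition \ref{p29}. The paper's own argument for the forward direction is more elementary and self-contained: choose a maximal $R$-regular sequence $x_1,\dots,x_d$ in $\fm\setminus\fm^2$, apply Lemma \ref{l27} repeatedly to push the summand relation $\syz_a^R(k)\mid\syz_b^R(k)$ down to the Artinian Gorenstein quotient $R'$, then use the fact that over an Artinian Gorenstein ring the syzygies of $k$ are indecomposable (Yoshino), so some $\syz_{b-i}^{R'}(k)\cong\syz_{a-j}^{R'}(k)$ with $b-i\neq a-j$, forcing an eventually periodic resolution of $k$ over $R'$ and hence the hypersurface property. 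What your route buys is brevity once Theorem \ref{firstTachikawa} is in hand; what the paper's route buys is independence from the Betti-growth and $\Ext$-vanishing apparatus, which is why Proposition \ref{p29} can be placed early as a warm-up illustrating that Gorenstein rings almost never satisfy $(\ast)$. The converse direction you give via Eisenbud's $2$-periodicity for hypersurfaces matches the paper's one-line ``immediate,'' and your handling of the regular case (zero summand of zero) is fine.
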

\begin{proof}
Let $R$ be a Gorenstein local ring. Suppose $R$ satisfies $(\ast)$. It means that there are distinct integers $a$, $b$ and an isomorphism 
$$\syz_a^Rk \cong \syz_b^Rk\oplus N$$
for some $R$-module $N$. Taking further syzygies of both sides if necessary, we may assume $a,b\ge \dim R$. Let $x_1,\dots,x_d$ be a maximal regular sequence contained in $\fm\setminus \fm^2$. Applying Lemma \ref{l27} iteratively, we obtain that 
$$\bigoplus_{i=0}^d \syz_{b-i}^{R'}k^{\oplus \binom{d}{i}}$$
is a direct summand of 
$$\bigoplus_{i=0}^d \syz_{a-i}^{R'}k^{\oplus \binom{d}{i}},$$
where $R':=R/(x_1,\dots,x_d)$ is an Artinian ring. We may also assume that $R'$ is not a field. In this case, for any $n\ge 0$, $\syz_n^{R'}k$ is indecomposable since $R$ is Artinian Gorenstein. Thus, for each $i\in \{0,\dots,d\}$, there exists $j\in \{0,\dots,d\}$ such that $\syz_{b-i}^{R'}k$ is isomorphic to $\syz_{a-j}^{R'}k$. 

Since $a\not=b$, there is at least one pair $(i,j)$ with $b-i\not=a-j$ and $\syz_{b-i}^{R'}k\cong \syz_{a-j}^{R'}k$. It follows that $k$ has an eventually periodic resolution over $R'$, and consequently $R$ is a hypersurface.

The converse is immediate.
\end{proof}

We now investigate the monotonicity of the Betti sequence and its application to Tachikawa conjecture.

Let $(R, \fm, k)$ be a Noetherian local ring, and let $N$ be a finitely generated $R$-module with a minimal free resolution
$$\cdots \stackrel{d_3}{\longrightarrow} R^{\oplus \beta_2}\stackrel{d_2}{\longrightarrow}  R^{\oplus \beta_1}\stackrel{d_1}{\longrightarrow} R^{\oplus \beta_0}\stackrel{d_0}{\longrightarrow} N.$$
Applying the functor $\Hom_R(-, R)$ to this resolution yields the complex
$$0\longrightarrow \Hom_R(N, R)\stackrel{d^{-1}}{\longrightarrow}R^{\oplus\beta_0}\stackrel{d^0}{\longrightarrow}R^{\oplus\beta_1} \stackrel{d^1}{\longrightarrow}R^{\oplus\beta_2}\stackrel{d^2}{\longrightarrow}\cdots$$
whose $i$-th cohomology module is $\Ext^i_R(N, R)$.

\begin{lemma}\label{resolution}
Let $N$ be a finitely generated $R$-module. Suppose $\Ext^i_R(N, R)=0$ for all $i>0$. There is an exact sequence
$$0\longrightarrow \Hom_R(N, R)\stackrel{d^{-1}}{\longrightarrow}R^{\oplus\beta_0}\stackrel{d^0}{\longrightarrow}R^{\oplus\beta_1} \stackrel{d^1}{\longrightarrow}R^{\oplus\beta_2}\stackrel{d^2}{\longrightarrow}\cdots$$
where $\beta_i:=\beta_i(N)$ is the $i$-th Betti number of $N$. Furthermore, $\Ima(d^i)\subseteq \fm R^{\oplus\beta_{i+1}}$ for all $i\geq 0$.
\end{lemma}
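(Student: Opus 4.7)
The plan is to decompose the lemma into its two assertions and handle each by a direct appeal to definitions.

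For exactness of the augmented complex, I would first recall that $\Hom_R(-,R)$ is left exact. Applying it to the right-exact tail $R^{\oplus\beta_1}\xrightarrow{d_1} R^{\oplus\beta_0}\to N\to 0$ of the minimal free resolution immediately produces the exact sequence $0\to \Hom_R(N,R)\xrightarrow{d^{-1}} R^{\oplus\beta_0}\xrightarrow{d^0} R^{\oplus\beta_1}$, which settles exactness at $\Hom_R(N,R)$ and at $R^{\oplus\beta_0}$. For positions $R^{\oplus\beta_i}$ with $i\ge 1$, the cohomology of the dualized complex is by definition $\Ext^i_R(N,R)$, which vanishes by hypothesis; hence exactness holds at every spot.

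For the claim $\Ima(d^i)\subseteq \fm R^{\oplus\beta_{i+1}}$ when $i\ge 0$, the key input is minimality of the resolution. Minimality means each original differential $d_{i+1}\colon R^{\oplus\beta_{i+1}}\to R^{\oplus\beta_i}$ is represented, with respect to some choice of bases, by a matrix whose entries lie in $\fm$. Under the canonical identification $\Hom_R(R^{\oplus\beta_j},R)\cong R^{\oplus\beta_j}$ via the dual basis, the map $d^i=\Hom_R(d_{i+1},R)$ is represented by the transpose of that matrix, whose entries still lie in $\fm$. Therefore $d^i$ sends every basis element of $R^{\oplus\beta_i}$ into $\fm R^{\oplus\beta_{i+1}}$, which gives the desired inclusion.

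I do not anticipate a significant obstacle here; the statement is essentially the standard observation that dualizing a minimal free resolution of a module with vanishing positive $\Ext$ into $R$ yields a (doubly infinite) exact complex of free modules whose differentials are still minimal. The only point requiring a little care is the bookkeeping of indices and the explicit identification of $d^i$ with the transpose of $d_{i+1}$, to ensure that the minimality property of the original resolution is correctly transported to the dual complex.
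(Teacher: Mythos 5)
Your proof is correct and follows the same approach as the paper: exactness is read off from the definition of $\Ext$ and left exactness of $\Hom_R(-,R)$, and the inclusion $\Ima(d^i)\subseteq \fm R^{\oplus\beta_{i+1}}$ comes from identifying $d^i$ with the transpose of the minimal differential $d_{i+1}$, whose entries lie in $\fm$. No gaps.
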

\begin{proof}
The exactness follows from the discussion above. 

In the minimal free resolution of $N$, the map $d_{i+1}: R^{\oplus\beta_{i+1}}\rightarrow R^{\oplus \beta_i}$ is given by a matrix $A_{i+1}$ with entries in $\fm$. The corresponding matrix of the dual map
$$d^i: \Hom_R(R^{\oplus\beta_i}, R)\rightarrow \Hom_R(R^{\oplus \beta_{i+1}}, R)$$
is the transition matrix $A_{i+1}^T$. So $\Ima(d^i)\subseteq \fm R^{\oplus\beta_{i+1}}$.
\end{proof}

Combining the exact sequence in Lemma \ref{resolution} with a minimal free resolution of $\Hom_R(N, R)$, we obtain a two-side unbounded exact sequence of free $R$-modules
\begin{equation}\label{unboundedseq_h}
\cdots\stackrel{\delta_3}{\longrightarrow}R^{\oplus\alpha_2}\stackrel{\delta_2}{\longrightarrow}R^{\oplus\alpha_1} \stackrel{\delta_1}{\longrightarrow}R^{\oplus\alpha_0}\stackrel{\delta}{\longrightarrow}
R^{\oplus\beta_0}\stackrel{d^0}{\longrightarrow}R^{\oplus\beta_1} \stackrel{d^1}{\longrightarrow}R^{\oplus\beta_2}\stackrel{d^2}{\longrightarrow}\cdots
\end{equation}
where
\begin{itemize}
\item $\beta_i:=\beta_i(N)$ is the $i$-th Betti number of $N$;
\item $\alpha_i:=\beta_i(\Hom_R(N,R))$ is the $i$-th Betti number of $\Hom_R(N, R)$;
\item $\Ima(d^i)\subseteq \fm R^{\oplus\beta_{i+1}}$ for all $i\geq 0$;
\item $\Ima(\delta_i)\subseteq \fm R^{\oplus\alpha_{i+1}}$ for all $i\geq 0$;
\item $\Ima(\delta)\simeq \Hom_R(N, R)\simeq \Ker(d^0)$;
\item $\Ima(d^i)=\Hom_R(\syz_{i+1}^R(N), R)$.
\end{itemize}

\begin{lemma}\label{BettisequenceSyzygy}
Let $N$ be a finitely generated $R$-module. Suppose $\Ext^i_R(N, R)=0$ for all $i>0$. Put $\beta_0^\prime=\dim_k(\Cok(\delta\otimes_Rk)), \alpha_0^\prime=\dim_k(\Ker(\delta\otimes_Rk)$. The sequence 
\begin{equation}\label{Bettisequence}
\beta_n, \beta_{n-1}, \ldots,\beta_2, \beta_1, \beta_0^\prime, \alpha_0^\prime, \alpha_1, \alpha_2, \alpha_3, \ldots, \alpha_m, \ldots
\end{equation}
bounds above the Betti sequence of the module $\Hom_R(\syz_{n+1}^R(N), R)$. More precisely, we have
\begin{itemize}
\item $\beta_i=\beta_{n-i}^R(\Hom_R(\syz_{n+1}^R(N), R))$, for $0<i\leq n$;
\item $\beta_0^\prime\geq \beta_n^R(\Hom_R(\syz_{n+1}^R(N), R))$;
\item $\alpha_0^\prime\geq \beta_{n+1}^R(\Hom_R(\syz_{n+1}^R(N), R))$;
\item $\alpha_i\geq\beta_{n+i+1}^R(\Hom_R(\syz_{n+1}^R(N), R))$, for $i>0$.
\end{itemize}
In particular, $\beta_n, \beta_{n-1}, \ldots,\beta_2, \beta_1$ are the first $n$ Betti numbers of $\Hom_R(\syz_{n+1}^R(N), R)$ and 
$$\alpha_0-\alpha_0^\prime=\beta_0-\beta_0^\prime\geq 0.$$
 \end{lemma}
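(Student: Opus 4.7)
The plan is to treat the truncation of \eqref{unboundedseq_h} on the right as a (typically non-minimal) free resolution of $M := \Hom_R(\syz_{n+1}^R(N), R)$, and to read off all Betti numbers of $M$ by tensoring with $k$. The decisive observation is that every differential of this resolution except $\delta$ already has image inside $\fm$ times its codomain, so it vanishes upon tensoring with $k$; only $\delta \otimes_R k$ contributes nontrivially to the homology.

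First, I would truncate \eqref{unboundedseq_h} by restricting the codomain of $d^n$ to its image $M$, producing the free resolution
\[
\cdots \xrightarrow{\delta_2} R^{\oplus \alpha_1} \xrightarrow{\delta_1} R^{\oplus \alpha_0} \xrightarrow{\delta} R^{\oplus \beta_0} \xrightarrow{d^0} R^{\oplus \beta_1} \xrightarrow{d^1} \cdots \xrightarrow{d^{n-1}} R^{\oplus \beta_n} \to M \to 0.
\]
By the properties listed in the construction of \eqref{unboundedseq_h}, every differential $d^i$ with $i \ge 0$ has image in $\fm$ times its codomain. Since the $\delta_j$ with $j \ge 1$ are the differentials of the minimal free resolution of $\Hom_R(N,R)$, they have image in $\fm$ times the codomain as well. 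The only differential whose image can fail to lie in $\fm$ times the codomain is $\delta$, because $\delta$ factors as the minimal surjection $R^{\oplus \alpha_0} \twoheadrightarrow \Hom_R(N,R)$ followed by the inclusion $\Hom_R(N,R) \hookrightarrow R^{\oplus \beta_0}$, and the latter need not land inside $\fm R^{\oplus \beta_0}$.

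I would then compute $\Tor_i^R(M, k)$ as the homology of this deleted resolution tensored with $k$. After tensoring, every differential becomes the zero map except $\delta \otimes_R k \colon k^{\oplus \alpha_0} \to k^{\oplus \beta_0}$, which has cokernel of dimension $\beta_0'$ and kernel of dimension $\alpha_0'$ by definition. Computing the homology degree by degree yields $\beta_i^R(M) = \beta_{n-i}$ for $0 \le i \le n-1$, then $\beta_n^R(M) = \beta_0'$, $\beta_{n+1}^R(M) = \alpha_0'$, and finally $\beta_{n+1+j}^R(M) = \alpha_j$ for $j \ge 1$, which matches (indeed attains equality in) every claimed upper bound. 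The identity $\alpha_0 - \alpha_0' = \beta_0 - \beta_0' \ge 0$ is then just rank-nullity applied to $\delta \otimes_R k$. The only real obstacle is the bookkeeping of identifying which differentials lie in $\fm$ times the codomain, which reduces to carefully invoking the properties enumerated in the construction of \eqref{unboundedseq_h}.
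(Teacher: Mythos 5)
Your proposal is correct and is essentially the argument the paper intends: the paper's proof of this lemma simply declares it immediate from Lemma \ref{resolution} and the construction of the two-sided exact sequence \eqref{unboundedseq_h}, which is precisely your truncation of that sequence at $\Ima(d^n)=\Hom_R(\syz_{n+1}^R(N),R)$ followed by tensoring with $k$ and noting that every differential except $\delta$ is minimal. In fact your computation gives equalities $\beta_n^R(\Hom_R(\syz_{n+1}^R(N),R))=\beta_0'$, $\beta_{n+1}^R(\Hom_R(\syz_{n+1}^R(N),R))=\alpha_0'$ and $\beta_{n+1+i}^R(\Hom_R(\syz_{n+1}^R(N),R))=\alpha_i$, which is slightly stronger than, and of course implies, the stated upper bounds.
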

 \begin{proof}
 The conclusion is immediate from Lemma \ref{resolution} and the discussion above. 
 \end{proof}
  
If the Betti sequences of all finitely generated $R$-modules are non-decreasing, then the sequence (\ref{Bettisequence}) either stabilizes at a constant value or terminates after finitely many steps. Hence, the Betti sequence of $N$ is either constant, or $N$ has finite projective dimension.

In \cite{Avr84} (also \cite[Problem 4.3.3]{Avr98}), Avramov conjectured that the sequence of Betti numbers of any finitely generated module over a Noetherian local ring is eventually non-decreasing. The following proposition provides a partial confirmation of this conjecture for local rings satisfying the syzygy direct summand assumption $(\ast)$.

\begin{proposition}\label{monotonicity}
Let $(R, \fm, k)$ be a Noetherian local ring. Suppose there are non-negative integers $a\not=b$ such that the $a$-th syzygy $\syz_a^R(k)$ is a direct summand of $\syz_b^R(k)$. Then either $a<b$ or $R$ is a hypersurface. 

Suppose that $R$ is not a hypersurface.
\begin{itemize}
\item[(a)] Denote $p=b-a$ and take an integer $0<q\leq b-a$. For a finitely generated $R$-module $M$, the subsequence of Betti numbers $\{\beta_{pn+q}(M)\}_{pn+q> a}$ is non-decreasing.
\item[(b)] $\depth R \le a$.
\end{itemize}
\end{proposition}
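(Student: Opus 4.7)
My plan is to handle the three claims in turn, using two basic principles: (i) if $A\mid B$ as $R$-modules then $\syz_j^R A\mid \syz_j^R B$ for every $j\ge 0$, because the minimal free resolution of a direct sum is the direct sum of minimal free resolutions; and (ii) a direct summand $A$ of $B$ satisfies $\depth A\ge \depth B$, since a $B$-regular sequence is automatically $A$-regular.

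To prove that $a>b$ forces $R$ to be a hypersurface, set $p:=a-b>0$ and iterate (i) to obtain $\syz_{b+np}^R k\mid \syz_b^R k$ for every $n\ge 0$. For each $j\ge 0$, additivity of $\Tor$ on direct sums combined with the minimality of the resolution of $k$ (which makes $\dim_k\Tor_j^R(\syz_m^R k,k)=\beta_{m+j}(k)$ for all $m,j\ge 0$) yields $\beta_{b+np+j}(k)\le \beta_{b+j}(k)$. Since every $m\ge b$ can be written as $b+np+j$ with $n\ge 0$ and $0\le j<p$, the sequence $\{\beta_n(k)\}$ is bounded; the classical theorem of Gulliksen that $R$ is a hypersurface precisely when the Betti numbers of $k$ are bounded completes this step.

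Assuming from now on that $R$ is not a hypersurface, so $a<b$, put $p:=b-a>0$. By (i), $\syz_n^R k\mid \syz_{n+p}^R k$ for every $n\ge a$. For any finitely generated $R$-module $M$ and $n\ge a$, the dimension shift $\Tor_{n+1}^R(M,k)\cong \Tor_1^R(M,\syz_n^R k)$ together with additivity of $\Tor$ gives $\beta_{n+1}(M)\le \beta_{n+p+1}(M)$, i.e., $\beta_m(M)\le \beta_{m+p}(M)$ for all $m>a$. This proves (a): for $0<q\le p$ the subsequence $\{\beta_{pn+q}(M)\}_{pn+q>a}$ is non-decreasing.

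For (b), suppose towards a contradiction that $d:=\depth R>a$. The long exact sequence in local cohomology applied to $0\to \syz_{i+1}^R k\to F_i\to \syz_i^R k\to 0$, combined with $\rH^j_\fm(F_i)=0$ for $j<d$, yields $\rH^j_\fm(\syz_{i+1}^R k)\cong \rH^{j-1}_\fm(\syz_i^R k)$ for all $j<d$ and $i\ge 0$; iterating down to $\syz_0^R k=k$ gives $\rH^j_\fm(\syz_n^R k)=k$ when $j=n<d$ and $\rH^j_\fm(\syz_n^R k)=0$ for $j<d$ and $j\ne n$. In particular $\depth \syz_a^R k=a$ while $\depth \syz_b^R k\ge \min(b,d)>a$; by (ii) this contradicts $\syz_a^R k\mid \syz_b^R k$. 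The main external input is Gulliksen's hypersurface characterization via bounded Betti numbers of $k$; the rest of the plan is routine bookkeeping with syzygies, $\Tor$, and local cohomology.
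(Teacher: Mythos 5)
Your proposal is correct. For the dichotomy ($a<b$ or hypersurface) and for part (a) you use essentially the paper's mechanism: additivity of $\Tor$ over direct summands plus the dimension shift $\Tor_{n+1}^R(M,k)\cong\Tor_1^R(M,\syz_n^Rk)$, together with the standard fact that the residue field of a non-hypersurface has unbounded Betti numbers (the paper quotes extremality of $k$, you quote Gulliksen; it is the same external input, and your iteration $\syz_{b+np}^Rk\mid\syz_b^Rk$ is just a repackaging of the paper's "non-increasing subsequence'' argument). Where you genuinely diverge is part (b): the paper gets it as an immediate consequence of (a) applied to the test module $M=R/(\underline{x})$ for a maximal $R$-regular sequence $\underline{x}$, whose Koszul resolution forces the Betti numbers to drop to $0$ and thus violates monotonicity if $\depth R>a$; you instead compute local cohomology along the syzygy exact sequences to get $\rH^j_\fm(\syz_n^Rk)\cong\rH^{j-n}_\fm(k)$ for $j<\depth R$, hence $\depth\syz_a^Rk=a$ while $\depth\syz_b^Rk\ge\min\{b,\depth R\}>a$, contradicting the inequality $\depth(\text{summand})\ge\depth(\text{whole})$. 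Both routes are valid; the paper's is a one-liner once (a) is in hand and a finite-projective-dimension test module is chosen, while yours is self-contained (and your computation $\rH^a_\fm(\syz_a^Rk)\cong k\neq0$ even shows $\syz_a^Rk\neq0$ without appealing to non-regularity), at the cost of a routine local cohomology argument that the paper replaces by the depth lemma plus Koszul Betti numbers.
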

\begin{proof}
First we assume that $a<b$. Denote $p=b-a$ and take an integer $0<q\leq b-a$.  By the assumption there is a decomposition $\syz_b^R(k)\simeq \syz_a^R(k)\oplus N$ where $N$ is a finitely generated $R$-module. Take $n\geq 0$ such that $p(n+1)-b+q=pn-a+q$ is positive. We have
\[\begin{aligned}
\beta_{p(n+1)+q}(M)&=\dim_k\Tor_{p(n+1)+q}^R(k, M)\\
&=\dim_k\Tor_{p(n+1)-b+q}^R(\syz_b^R(k), M) \\
&=\dim_k\Tor_{p(n+1)-b+q}^R(\syz_a^R(k), M)+\dim_k\Tor_{p(n+1)-b+q}^R(N, M)\\
&=\beta_{pn+q}(M)+\dim_k\Tor_{pn-a+q}^R(N, M)\\
&\geq \beta_{pn+q}(M).
\end{aligned}\]
It proves (a). 

If $b<a$ then a similar proof shows that the subsequence $\{\beta_{n(a-b)+q}(M)\}$ is eventually non-increasing, where $0<q\leq a-b$ is fixed. On the other hand, if $R$ is not a hypersurface then $k$ is extremal, i.e., $\lim_m\sup\beta_m(k)=\infty$, a contradiction!

Next we verify (b). Let $t=\depth R$. Note that $\depth \syz_b^R(k) \ge \min\{\depth R, b\}$.
Take an $R$-regular sequence $\underline{x}$ of length $t$. Then $\beta_t(R/(\underline{x}))=1< \beta_{t-p}(R/\underline{x})$.
\end{proof}

Suppose $R=S/J\fn$ where $(S, \fn, k)$ is a Noetherian local ring, $J$ is an ideal such that $J\fn\not=0$, then $R$ is Burch due to Dao-Kobayashi-Takahashi \cite[Example 2.2]{DKT20}. In particular, $\syz_t^R(k)$ is a direct summand of $\syz_{t+2}^R(k)$ where $t=\depth(R)$. Proposition \ref{monotonicity} confirms that for any finitely generated $R$-module $M$, the two sequences of Betti numbers $\{\beta_{2n}(M)\}_{n>0}$ and $\{\beta_{2n+1}(M)\}_{n\geq0}$ are non-decreasing. This result was actually proved long ago by Lescot \cite[Theorem A.3]{Les85}. So we can consider Proposition \ref{monotonicity} as a generalization of Lescot's theorem.

In ring theory, several homological conjectures investigate the relationship between the vanishing of certain $\Ext$ modules and the structural properties of the underlying ring. Notable examples include the Nakayama conjecture, the generalized Nakayama conjecture (due to  Auslander and Reiten), and two conjectures proposed by Tachikawa. In \cite{ABS05}, Avramov, Buchweitz, and Şega introduced a commutative version of Tachikawa's first conjecture on the vanishing of the module $\Ext^i_R(K_R, R)$.

\begin{conjecture}[Tachikawa]\label{Tachikawa}
Let $(R, \fm, k)$ be a Noetherian local ring with a canonical module $K_R$. If $\Ext^i_R(K_R, R)=0$ for all $i>0$ then $R$ is Gorenstein.
\end{conjecture}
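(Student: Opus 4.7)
The plan is to argue by contradiction, combining Proposition~\ref{monotonicity} with Lemma~\ref{BettisequenceSyzygy} applied to $N = K_R$. By the first assertion of Proposition~\ref{monotonicity}, I may assume $a < b$ (otherwise $R$ is already a hypersurface and the conclusion holds); set $p := b - a$. Suppose, toward contradiction, that $R$ is not a hypersurface, so that Proposition~\ref{monotonicity}(a) applies to every finitely generated $R$-module along every arithmetic progression of common difference $p$.

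Since $\Ext^i_R(K_R, R) = 0$ for all $i > 0$, Lemma~\ref{BettisequenceSyzygy} applies with $N = K_R$. For each $n \ge 1$, set $M_n := \Hom_R(\syz_{n+1}^R(K_R), R)$. The lemma yields the equality
$$
\beta_j(M_n) = \beta_{n-j}(K_R)\qquad \text{for } 0 \le j \le n-1,
$$
so the first $n$ Betti numbers of $M_n$ form a reversed segment of the Betti sequence of $K_R$.

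The key step is a squeeze via a simultaneous application of Proposition~\ref{monotonicity}(a) to both $M_n$ and $K_R$. Applied to $M_n$, the proposition says that for any $q \in \{1,\dots,p\}$ the subsequence $\{\beta_{pj+q}(M_n)\}_{pj+q > a}$ is non-decreasing; via the equality above, this translates, in the window where both $pj+q$ and $p(j+1)+q$ lie in $[0, n-1]$, into $\{\beta_m(K_R)\}$ being \emph{non-increasing} along the reversed progression $m = n - pj - q$. Applied directly to $K_R$, Proposition~\ref{monotonicity}(a) says $\{\beta_m(K_R)\}$ is non-decreasing along the same residue class modulo $p$. Squeezing, $\beta_m(K_R) = \beta_{m-p}(K_R)$ for all sufficiently large $m$ in that class. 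Because $n$ is unrestricted and $q$ ranges over $\{1,\dots,p\}$, every residue class modulo $p$ is eventually reached, and hence $\{\beta_m(K_R)\}$ is eventually constant on each residue class modulo $p$; in particular it is bounded.

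It remains to deduce from the boundedness of $\{\beta_m(K_R)\}$ together with $\Ext^i_R(K_R, R) = 0$ that $R$ must be Gorenstein; Proposition~\ref{p29} combined with the syzygy direct summand assumption $(\ast)$ then forces $R$ to be a hypersurface, contradicting the standing assumption. The strategy for this last step is to upgrade the given $\Ext$-vanishing to total reflexivity of $K_R$: the hypothesis is already half of that condition, and the missing reflexivity of $K_R$ together with the dual vanishing $\Ext^i_R(\Hom_R(K_R,R), R) = 0$ for $i > 0$ should be extracted by reapplying the squeeze of the previous paragraph to $\Hom_R(K_R, R)$ (whose Betti numbers appear as the $\alpha_i$ in Lemma~\ref{BettisequenceSyzygy}) and by exploiting the symmetry of the two-sided exact sequence~(\ref{unboundedseq_h}). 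Once $K_R$ is totally reflexive its $G$-dimension vanishes; combined with the finite injective dimension of $K_R$ as dualizing module, Foxby's theorem then forces $K_R$ to have finite projective dimension, hence to be free by Auslander--Buchsbaum, so $R$ is Gorenstein. The main obstacle I anticipate is precisely this last stage: verifying that the bounded-Betti conclusion is strong enough to produce the extra $\Ext$-vanishing on $\Hom_R(K_R, R)$ needed to run the Auslander--Bridger machinery.
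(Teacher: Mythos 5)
There is a genuine gap, on two levels. First, the statement you are asked to prove is the full Tachikawa Conjecture \ref{Tachikawa}, which carries no syzygy hypothesis; the paper itself does not prove it (it is stated as a conjecture and remains open in general). Your argument begins by invoking Proposition \ref{monotonicity} for integers $a\neq b$ with $\syz_a^R(k)$ a direct summand of $\syz_b^R(k)$ --- that is, you silently import the syzygy direct summand assumption $(\ast)$ (and, implicitly, Cohen--Macaulayness), which are exactly the extra hypotheses of Theorem \ref{firstTachikawa}. So even if completed, your proposal would prove that theorem, not the stated conjecture.

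Second, viewed as an attempt at Theorem \ref{firstTachikawa}, the first half of your argument (the squeeze combining Lemma \ref{BettisequenceSyzygy} applied to $\Hom_R(\syz_{m+1}^R(K_R),R)$ with Proposition \ref{monotonicity}, giving that $\{\beta_{pn+q}(K_R)\}$ is eventually constant in each residue class mod $p$, hence bounded) is exactly the paper's opening move and is fine. The final stage, however, is a missing idea, not a verification you postponed: you need $\Ext^{i}_R(\Hom_R(K_R,R),R)=0$ for $i>0$ and reflexivity of $K_R$ to get total reflexivity, and boundedness of $\{\beta_i(K_R)\}$ gives you no handle on either --- indeed, whether bounded Betti numbers of $K_R$ force Gorensteinness is itself a delicate question (cf.\ Christensen--Striuli--Veliche \cite{CSV10} and Jorgensen--Leuschke \cite{JL07}), and in this paper it is settled under $(\ast)$ only as Theorem \ref{secondTachikawa}, by a different mechanism. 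The paper's actual proof never touches G-dimension or Foxby: writing $\syz_b^R(k)\simeq\syz_a^R(k)\oplus N$, the constancy of $\{\beta_{pn+q}(K_R)\}$ combined with the identity $\beta_{p(n+1)+q}(K_R)=\beta_{pn+q}(K_R)+\dim_k\Tor^R_{pn-a+q}(N,K_R)$ forces $\Tor^R_{>0}(N,K_R)=0$; from this one deduces $\Ext^{m}_R(\syz_d^R(N),R)=0$ for all $m>0$, reruns the same squeeze with $\syz_d^R(N)$ in place of $K_R$, and then Avramov's extremality of nonzero quotients of syzygies of $k$ \cite{Avr96} forces $\syz_d^R(N)=0$, so the resolution of $k$ is eventually periodic and $R$ is a hypersurface. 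The leverage comes from the complementary summand $N$ and extremality, not from $K_R$ alone; without that (or some substitute argument for total reflexivity), your concluding step does not go through.
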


The conjecture has been verified for several important classes of local rings, including (1) Artinian rings of Loewy length at most three by Asashiba \cite{Asa91}, Avramov-Buchweitz-\c{S}ega \cite{ABS05}, Huneke-\c{S}ega-Vraciu \cite[Theorem 2.11]{HSV04}); (2) Deformations of generically Gorenstein rings by Avramov-Buchweitz-\c{S}ega \cite[Theorem 3.1]{ABS05}, Hanes-Huneke \cite[Corollary 2.2]{HH05}; Golod rings by Avramov-Buchweitz-\c{S}ega \cite[Theorem 8.1]{ABS05}. If $R$ is a Burch ring then Dao-Kobayashi-Takahashi \cite[Proposition 7.12, Corollary 7.13]{DKT20} proved that $R$ is $\Tor$-friendly. Consequently, Tachikawa conjecture holds true for Burch rings. In all the works, it is shown that $\Ext$-vanishing is deeply related to the growth of Betti numbers and to the singularity of the ring. The connection becomes more precise when some syzygies of the residue field contain special direct summands. 

In what follows, we confirm Tachikawa conjecture \ref{Tachikawa} for local rings that satisfy the syzygy direct summand assumption $(\ast)$. It is the main result of the section.

\begin{theorem}\label{firstTachikawa}
Let $(R, \fm, k)$ be a Cohen-Macaulay local ring with a canonical module $K_R$.
Suppose there are non-negative integers $a\not= b$ such that the $a$-th syzygy $\syz_a^R(k)$ is a direct summand of $\syz_b^R(k)$. If $\Ext^i_R(K_R, R)=0$ for all $i>0$ then $R$ is a hypersurface.
\end{theorem}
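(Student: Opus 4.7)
The plan is a proof by contradiction: assume that $R$ is not a hypersurface and derive a contradiction with $\Ext_R^i(K_R, R) = 0$ for all $i > 0$. Under this assumption, Proposition \ref{monotonicity} forces $a < b$, so set $p := b - a > 0$. The Ext-vanishing puts Lemma \ref{BettisequenceSyzygy} at my disposal with $N = K_R$: writing $\beta_i$ for $\beta_i(K_R)$, it yields the explicit equalities $\beta_j^R(M_n) = \beta_{n-j}$ for $0 \le j \le n-1$, where $M_n := \Hom_R(\syz_{n+1}^R(K_R), R)$. Thus the first $n$ Betti numbers of $M_n$ recover the initial segment of the Betti sequence of $K_R$ read in reverse.

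Next I would apply Proposition \ref{monotonicity}(a) to $M_n$: for every $0 < q \le p$, the subsequence $\{\beta_{pk+q}^R(M_n)\}_{pk+q > a}$ is non-decreasing. Substituting the formula $\beta_j^R(M_n) = \beta_{n-j}$ and letting $n \to \infty$ translates this monotonicity into $\beta_{i+p} \le \beta_i$ for all $i \ge 1$. On the other hand, Proposition \ref{monotonicity}(a) applied directly to $K_R$ delivers the reverse inequality $\beta_{i+p} \ge \beta_i$ for $i > a$. Combining the two gives $\beta_{i+p} = \beta_i$ for every $i > a$, so the Betti sequence of $K_R$ is eventually $p$-periodic and in particular bounded.

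The remaining task, which I expect to be the main technical obstacle, is to promote this boundedness (together with the Ext-vanishing) into finite projective dimension of $K_R$. Once this is achieved, $K_R \simeq R$ since $R$ is Cohen-Macaulay with canonical module, so $R$ is Gorenstein, and then Proposition \ref{p29} yields that $R$ is a hypersurface, contradicting the standing assumption. Two natural routes suggest themselves for this final step: either exploit the boundary behavior of the bounding sequence in Lemma \ref{BettisequenceSyzygy} at positions $n$ and $n+1$, linking $\beta_0'$, $\alpha_0'$ and the $\alpha_i$ to the $\beta_i$ through the just-established periodicity, so as to extract a free direct summand of $K_R$; or reduce to the Artinian case via a regular sequence $\underline{x} \subset \fm \setminus \fm^2$, using Corollary \ref{c26} to descend the syzygy direct summand assumption to $R/\underline{x}R$ and a standard base change to descend the Ext-vanishing, and then argue directly in the Artinian setting, where $K_R$ is the injective hull of the residue field and more concrete manipulations with socles and duals are available.
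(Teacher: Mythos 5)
Your first half coincides with the paper's own argument and is correct: using Lemma \ref{BettisequenceSyzygy} to read the initial segment of the Betti sequence of $K_R$ backwards as the Betti numbers of $\Hom_R(\syz_{m+1}^R(K_R),R)$, and applying Proposition \ref{monotonicity} both to these duals and to $K_R$ itself, does give that $\{\beta_{pn+q}(K_R)\}_{pn+q>a}$ is constant (hence bounded). The gap is everything after that. You state that the remaining task is to promote this boundedness, together with the $\Ext$-vanishing, into finite projective dimension of $K_R$, and you only sketch two possible routes without carrying either out. Neither is routine. Since the Betti sequence of $K_R$ is eventually \emph{constant}, proving finite projective dimension amounts to proving that the constant value is zero, i.e.\ essentially that bounded Betti numbers of $K_R$ force Gorensteinness; in general this is exactly the open question of Christensen--Striuli--Veliche recalled after Theorem \ref{secondTachikawa}, so any proof must use the summand hypothesis $(\ast)$ in a way your sketch does not. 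Your route via Corollary \ref{c26} also only descends $(\ast)$ in the weaker form that a direct sum of several syzygies of $k$ over $R/\underline{x}R$ is a summand of another such sum (disentangling this needed a Krull--Schmidt argument in Proposition \ref{p29}, available there only because of Gorenstein indecomposability), and once in the Artinian setting you still have no argument.

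The idea your proposal is missing is the paper's pivot from $K_R$ to the complementary summand $N$ in $\syz_b^R(k)\simeq\syz_a^R(k)\oplus N$. The increment formula from the proof of Proposition \ref{monotonicity}, $\beta_{p(n+1)+q}(K_R)=\beta_{pn+q}(K_R)+\dim_k\Tor^R_{pn-a+q}(N,K_R)$, converts the constancy you established into $\Tor^R_m(N,K_R)=0$ for all $m>0$. A duality argument with the canonical module (tensoring a minimal free resolution of $N$ with $K_R$, noting $\syz_d^R(N)\otimes_R K_R$ is maximal Cohen--Macaulay for $d=\dim R$, and applying $\Hom_R(-,K_R)$) then yields $\Ext^m_R(\syz_d^R(N),R)=0$ for all $m>0$, so the whole constancy machinery can be re-run with $\syz_d^R(N)$ in place of $K_R$, giving bounded Betti numbers for $\syz_d^R(N)$. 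But $\syz_d^R(N)$ is a direct summand, hence a quotient, of $\syz_{b+d}^R(k)$, and by Avramov's extremality theorem a nonzero such quotient has unbounded Betti numbers; hence $\syz_d^R(N)=0$, so $\syz_{b+d}^R(k)\simeq\syz_{a+d}^R(k)$, the resolution of $k$ is eventually periodic, and $R$ is a hypersurface, the desired contradiction. Without this passage through $N$ (or a genuinely new argument for your finite-projective-dimension claim), the proposal is not yet a proof.
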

\begin{proof}
Suppose $R$ is not a hypersurface. Then $a<b$. Let's write 
\[
\syz_b^R(k)\simeq \syz_a^R(k)\oplus N.
\]

We denote $p:=b-a$ and take an integer $0<q\leq p$. The subsequence $\{\beta_{pn+q}(K_R)\}_{pn+q>a}$ of the Betti sequence of the canonical module is non-decreasing due to Proposition \ref{monotonicity}. Let $m=pn+q$ for some $n$. By Lemma \ref{BettisequenceSyzygy}, the numbers $\beta_m(K_R)$, $\ldots$, $\beta_1(K_R)$ in this order are the first $m$ Betti numbers of $\Hom_R(\syz_{m+1}^R(K_R), R)$. As it holds true for any $m> 0$, the subsequence $\beta_m(K_R)$, $\beta_{m-p}(K_R)$, $\ldots$, $\beta_{p+q}(K_R)$, $\beta_{q}(K_R)$ is non-decreasing. It shows that $\{\beta_{pn+q}(K_R)\}_{pn+q>a}$ is constant. 

In the proof of Proposition \ref{monotonicity}, we have seen the equality
\[
\beta_{p(n+1)+q}(M)=\beta_{pn+q}(M)+\dim_k\Tor_{pn-a+q}^R(N, M).
\]
for $pn+q>a$ where $M$ is a finitely generated $R$-module and $\syz^R_b(k)\simeq \syz^R_a(k)\oplus N$. So 
\[
\Tor_m^R(N, K_R)=0
\]
for any $m>0$.
Let $F_\bullet$ be a minimal free resolution of $N$. Denote by $d$ the dimension of $M$. Applying the functor $(-)\otimes_R K_R$ to the exact sequence $0 \to \syz_d^R(N) \to F_{d-1} \to \cdots \to F_1 \to F_0$, we get an exact sequence
\[
0 \to \syz_d^R (N)\otimes_R K_R \to F_{d-1}\otimes_R K_R \to \cdots \to F_1\otimes_R K_R \to F_0\otimes_R K_R.
\]
In particular, it follows from depth lemma that $\syz_d^R(N)\otimes_R K_R$ is maximal Cohen-Macaulay.
Also, we further obtain an exact sequence
\[
\cdots \to F_{d+1}\otimes_R K_R \to F_d\otimes_R K_R \to \syz_d^R(N)\otimes_R K_R \to 0,
\]
where all terms are maximal Cohen-Macaulay.
Applying the functor $\Hom_R(-,K_R)$, we see that 
\[
0 \to \Hom_R(\syz_d^R(N),R) \to \Hom_R(F_d,R) \to \Hom_R(F_{d+1},R) \to \cdots 
\]
is exact. It means that $\Ext^{m}_R(\syz_d^R(N),R)=0$ for all $m > 0$. Using Lemma \ref{BettisequenceSyzygy} and Proposition \ref{monotonicity} again, a similar argument for $\syz_d^R(N)$ in place of $K_R$ shows that the Betti subsequence $\{\beta_{pn+q}(\syz_d^R(N))\}_{n\geq 0}$ is constant for given $0<q\leq p$. On the other hand, by a result of Avramov \cite[Theorem 8 and Corollary 9]{Avr96}, non-zero quotients of a syzygy of the residue field are extremal, in particular, their Betti sequences are unbounded. Hence $\syz_d^R(N)=0$. 

The syzygy of the residue field is therefore eventually periodic. This is possible if and only if $R$ is a hypersurface.
\end{proof}

In the proof of Theorem \ref{firstTachikawa} we have shown that the Betti sequence of the canonical module is $p$-periodically constant. In particular, the sequence is bounded. It is actually sufficient.

\begin{theorem}\label{secondTachikawa}
Let $(R, \fm, k)$ be a Cohen-Macaulay local ring with a canonical module $K_R$. Suppose there are non-negative integers $a\not= b$ such that the $a$-th syzygy $\syz_a^R(k)$ is a direct summand of $\syz_b^R(k)$. If the canonical module has bounded Betti numbers then $R$ is a hypersurface.
\end{theorem}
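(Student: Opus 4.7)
The goal is to adapt the proof of Theorem \ref{firstTachikawa}, replacing the hypothesis $\Ext^i_R(K_R, R)=0$ by the weaker condition that the Betti numbers of $K_R$ are bounded. The driving observation is that this weaker hypothesis, combined with Proposition \ref{monotonicity}, already forces substantial $\Tor$-vanishing between $K_R$ and the ``extra'' syzygy summand $N$; once enough such $\Tor$-vanishing is available, the argument of Theorem \ref{firstTachikawa} runs as before.

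Concretely, I would assume $R$ is not a hypersurface, so $a<b$ by Proposition \ref{monotonicity}; write $\syz_b^R(k)\simeq \syz_a^R(k)\oplus N$ and set $p=b-a$. Proposition \ref{monotonicity}(a) tells us that each Betti subsequence $\{\beta_{pn+q}(K_R)\}_{pn+q>a}$, with $0<q\le p$, is non-decreasing; combined with boundedness, each such subsequence is eventually constant. The explicit identity
\[
\beta_{p(n+1)+q}(K_R)-\beta_{pn+q}(K_R)=\dim_k\Tor^R_{pn-a+q}(N,K_R)
\]
derived in the proof of Proposition \ref{monotonicity} then yields $\Tor^R_m(N,K_R)=0$ for all sufficiently large $m$.

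The key step is to upgrade this to $\Tor$-vanishing in every positive degree, which is achieved by a dimension shift: for $s$ sufficiently large, $N':=\syz_s^R(N)$ satisfies $\Tor^R_m(N',K_R)\simeq \Tor^R_{m+s}(N,K_R)=0$ for every $m\ge 1$. Since $\syz_{b+s}^R(k)\simeq \syz_{a+s}^R(k)\oplus N'$, we are now in exactly the setup of the proof of Theorem \ref{firstTachikawa}, with $(a,b,N)$ replaced by $(a+s,b+s,N')$. The remainder of that proof then applies verbatim: tensoring a minimal free resolution of $N'$ with $K_R$ is exact; the depth lemma makes $\syz_d^R(N')\otimes_R K_R$ maximal Cohen--Macaulay (where $d=\dim R$); applying $\Hom_R(-,K_R)$ together with $\Hom_R(K_R,K_R)\simeq R$ yields $\Ext^m_R(\syz_d^R(N'),R)=0$ for all $m>0$; Lemma \ref{BettisequenceSyzygy} and Proposition \ref{monotonicity} together force the Betti subsequences of $\syz_d^R(N')$ to be constant; and Avramov's extremality theorem gives $\syz_d^R(N')=0$. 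Thus $N'$ has finite projective dimension, so $\syz_{b+s+d}^R(k)\simeq \syz_{a+s+d}^R(k)$, making the syzygies of $k$ eventually $p$-periodic and forcing $R$ to be a hypersurface, a contradiction.

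The only conceptually new step beyond Theorem \ref{firstTachikawa} is the extraction of large-degree $\Tor$-vanishing from bounded Betti numbers plus monotonicity, followed by the routine syzygy shift to convert it into vanishing in every positive degree. I do not anticipate any substantive obstacle beyond this reduction, since all of the remaining machinery has already been developed in the preceding proofs.
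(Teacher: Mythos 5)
Your proposal is correct and takes essentially the same route as the paper: boundedness plus Proposition \ref{monotonicity} gives eventual constancy of the Betti subsequences of $K_R$, hence $\Tor^R_m(N,K_R)=0$ for $m\gg 0$, after which the machinery of Theorem \ref{firstTachikawa} and Avramov's extremality theorem produce the contradiction. The only cosmetic difference is that you pass to a high syzygy $N'=\syz_s^R(N)$ to regain $\Tor$-vanishing in all positive degrees and conclude $\syz_d^R(N')=0$ (so $N$ has finite projective dimension), whereas the paper runs truncated versions of Lemmas \ref{resolution} and \ref{BettisequenceSyzygy} on $N$ itself to conclude $N=0$; both give eventual periodicity of the syzygies of $k$ and hence the hypersurface conclusion.
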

\begin{proof}
The proof is a refinement of the proof of Theorem \ref{firstTachikawa}, Lemma \ref{resolution} and Lemma \ref{BettisequenceSyzygy}. 

Suppose $R$ is not a hypersurface. Then $a<b$. The Betti sequence of the canonical module $K_R$ is bounded and non-decreasing by Proposition \ref{monotonicity}, it is then $(b-a)$-periodically constant from some point. Then $\Tor_m^R(N, K_R)=0,$ and hence $\Ext^m_R(N, R)=0$ for $m\gg0$. 

A similar proof as the proofs of Lemma \ref{resolution} and Lemma \ref{BettisequenceSyzygy} implies that the Betti subsequence $\{\beta_{pn+q}(N): n\in \bN, pn+q>a\}$ is eventually constant for given $0<q\leq p$. The extremity of $N$ as a quotient of some syzygy induces that $N=0$. Therefore $R$ is a hypersurface.
\end{proof}

For a local ring $R$ with a canonical module $K_R$, Christensen-Striuli-Veliche \cite[Question 1.3]{CSV10} asked whether the sequence $\{\beta_i(K_R)\}_{i>0}$ is eventually increasing, provided $R$ is not Gorenstein. Theorem \ref{firstTachikawa} gives a positive answer to this question for rings that satisfy the syzygy direct summand assumption $(\ast)$.

Theorem \ref{firstTachikawa} and Theorem \ref{secondTachikawa} strengthen the conclusion of Tachikawa conjecture in the case of local rings satisfying the syzygy direct summand assumption $(\ast)$. Such rings are also likely to satisfy the $\Tor$-persistence and $\Tor$-friendly properties in the sense of \cite{AINS22}. 

\begin{remark}
Let $(R, \fm, k)$ be an Artinian local ring with $\fm^3=0$.
\begin{itemize}
\item[(a)] If $R$ is not exceptional then any finitely generated $R$-module $M$ has a syzygy $\syz_i^R(M)$ containing a simple direct summand (see Lescot \cite[Proof of Theorem B]{Les85}). It is worth noting that the exceptional ring in \cite[Examples 3.8(2)]{Les85} satisfies $\fm^3=0$. 
\item[(b)] Tachikawa's Conjecture \ref{Tachikawa} holds true for $R$ (see Asashiba \cite{Asa91}, Avramov-Buchweitz-\c{S}ega \cite{ABS05}, Huneke-\c{S}ega-Vraciu \cite[Theorem 2.11]{HSV04}). 
\item[(c)] If $R$ is not Gorenstein, then the Bass sequence $\{\mu_i(R)\}_{i\geq 0}$ is increasing and has termwise exponential growth (see Christensen-Striuli-Veliche \cite[Theorem 5.1, Theorem A.4]{CSV10}).
\end{itemize}
\end{remark}

To conclude this section, we propose two problems for further investigation.

\begin{problem} Let $(R, \fm, k)$ be a Noetherian local ring.
\begin{itemize}
\item[(a)] Suppose $\fm^3=0$. It is interesting to explore non-trivial direct sum decompositions or to identify non-trivial direct summands of the syzygies of the residue field of $R$, especially in the case $R$ does not satisfy the syzygy direct summand assumption.
\item[(b)] Suppose $R$ satisfies the syzygy direct summand assumption $(\ast)$. Is it true that syzygies of any finitely generated $R$-module satisfy similar property?
\end{itemize}
\end{problem}


\section{Syzygy direct sum decomposition and characterization of Golod rings}

Unless otherwise specified, $(R, \fm, k)$ denotes again a Noetherian local ring and $e=\dim_k(\fm/\fm^2)$ its embedding dimension. We denote by $c=e-\depth(R)$ the codepth of $R$.

Theorem \ref{generalGolod} provides a direct sum decomposition of syzygies of the residue field of a Golod ring in terms of lower syzygies. Such decompositions are rare among local rings and one may ask whether they are special to Golod rings. The answer is affirmative, and we provide a proof in this section.

\begin{proposition} \label{p31}
Let $x\in \fm\setminus \fm^2$ be a non-zero divisor of $R$. Then $R$ is Golod if and only if so is $R/xR$.
\end{proposition}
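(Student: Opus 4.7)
The plan is to compare Serre's inequality \eqref{equ21} for $R$ and for $R/xR$ and show the two sides transform by the same factor under modding out by $x$, so that equality on one side is equivalent to equality on the other.

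First, I would record the three numerical invariants that enter Serre's bound. Since $x \in \fm \setminus \fm^2$, the embedding dimension of $R/xR$ is $e-1$. Since $x$ is a non-zero divisor, $\depth(R/xR) = \depth(R) - 1$, so the codepth $c = e - \depth R$ is unchanged. Next I would compare the Koszul homologies: if $\underline{x} = x, x_2, \dots, x_e$ is a minimal generating set of $\fm$, then $K_\bullet(\underline{x}; R) \cong K_\bullet(x; R) \otimes_R K_\bullet(x_2,\dots,x_e; R)$, and because $x$ is a non-zero divisor the complex $K_\bullet(x; R)$ is a free resolution of $R/xR$. Hence
\[
\mathrm{H}_i(\underline{x}; R) \;\cong\; \mathrm{H}_i\bigl(K_\bullet(x_2,\dots,x_e; R) \otimes_R R/xR\bigr) \;=\; \mathrm{H}_i(\underline{x}'; R/xR),
\]
where $\underline{x}'$ is the induced minimal generating set of $\fm/xR$. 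Thus $\rh_j(R) = \rh_j(R/xR)$ for every $j$.

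Next I would compare Poincar\'e series. This is exactly where Lemma \ref{l27} from the previous section pays off: since $x$ is a non-zero divisor on the torsion-free module $\syz_i^R k$, applying $-\otimes_R k$ to the decomposition
\[
\syz_i^R k / x\syz_i^R k \cong \syz_i^{R/xR} k \oplus \syz_{i-1}^{R/xR} k
\]
yields $\beta_i^R(k) = \beta_i^{R/xR}(k) + \beta_{i-1}^{R/xR}(k)$, so
\[
P^R_k(t) = (1+t)\, P^{R/xR}_k(t).
\]

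Finally I would substitute these two identities into Serre's inequality. The bound for $R$ reads
\[
P^R_k(t) \;\le\; \frac{(1+t)^{e}}{1 - t\sum_{j=1}^{c}\rh_j(R)\,t^{j}} \;=\; (1+t)\cdot\frac{(1+t)^{e-1}}{1 - t\sum_{j=1}^{c}\rh_j(R/xR)\,t^{j}},
\]
while the bound for $R/xR$ (after multiplying by $1+t$) gives exactly the right-hand side. Combined with $P^R_k(t) = (1+t) P^{R/xR}_k(t)$ and the fact that coefficients of $(1+t)$ times a power series over $\mathbb{Z}_{\ge 0}$ can be compared termwise, we conclude that equality holds in Serre's bound for $R$ if and only if it holds for $R/xR$.

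The only delicate point is the Koszul homology identification $\rh_j(R) = \rh_j(R/xR)$; everything else is bookkeeping with previously stated facts. I would expect no serious obstacle beyond verifying that the minimal generating set $\underline{x}'$ of $\fm/xR$ is genuinely minimal (which follows from $x \notin \fm^2$).
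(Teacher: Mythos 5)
Your argument is correct, and it is genuinely different in character from what the paper does: the paper gives no proof at all for Proposition \ref{p31}, simply citing Avramov's treatment (\cite[Proposition 5.2.4]{Avr98}, where the result is obtained within the general theory of Golod rings/homomorphisms), whereas you give a self-contained verification from the numerical definition of Golodness used in the paper, i.e.\ equality in Serre's bound \eqref{equ21}. Your three ingredients all check out: the invariants $e$, $\depth$, and hence the codepth behave as you say; the identification $\rh_j(R)=\rh_j(R/xR)$ is exactly the paper's Lemma \ref{l36} applied to $M=R$ (so you could simply cite it instead of re-deriving it from $K_\bullet(\underline{x};R)\cong K_\bullet(x;R)\otimes_R K_\bullet(x_2,\dots,x_e;R)$, though your derivation is fine once you note that tensoring the quasi-isomorphism $K_\bullet(x;R)\to R/xR$ with a bounded complex of free modules preserves homology); and the relation $\beta_i^R(k)=\beta_i^{R/xR}(k)+\beta_{i-1}^{R/xR}(k)$, hence $P^R_k(t)=(1+t)P^{R/xR}_k(t)$, follows from Lemma \ref{l27} by applying $-\otimes_R k$ (this is also a classical fact you could cite directly). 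Two cosmetic remarks: the aside that $x$ is a non-zero divisor on $\syz_i^Rk$ is not needed for the Betti-number count, and the final step is cleaner if you just observe that $(1+t)$ is a unit in $\mathbb{Z}[[t]]$, so equality of the two power series for $R$ holds if and only if it holds for $R/xR$; no termwise comparison argument is required. What your route buys is a proof internal to the paper's toolkit (Lemmas \ref{l27} and \ref{l36}) and tailored to its definition of Golod via Poincar\'e series, at the cost of redoing material that the cited reference packages once and for all.
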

\begin{proof}
We refer to \cite[Proposition 5.2.4]{Avr98}.
\end{proof}

We recall the ``depth sensitivity'' of Koszul homology.

\begin{lemma} \label{l31}
Let $M$ be a finitely generated $R$-module. The equality 
\begin{equation*}
e-\depth_R M=\max\{i \mid \rh_i(M)\not=0\}
\end{equation*}
holds. In particular, 
\begin{equation*}
c=\max\{i\mid \rh_i(R)\not=0\}.
\end{equation*}

\end{lemma}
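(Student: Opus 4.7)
The statement is the standard depth sensitivity theorem for the Koszul complex, specialized to a minimal generating set of the maximal ideal. The plan is to prove the equality
$$s(M) := \max\{i \mid \rh_i(M)\ne 0\} \;=\; e - \depth_R M$$
by induction on $\depth_R M$, using two well-known facts about Koszul homology: (i) since $\underline{x}$ generates $\fm$, the ideal $\fm$ annihilates every $\rH_i(\underline{x};M)$; and (ii) the top Koszul homology module equals $\rH_e(\underline{x};M) \cong (0:_M \fm)$.

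For the base case $\depth_R M=0$, prime avoidance gives $\fm\in\mathrm{Ass}(M)$, so $(0:_M\fm)\ne 0$. By (ii), $\rH_e(\underline{x};M)\ne 0$, and since $\rh_i=0$ for $i>e$ trivially, $s(M)=e$, which matches the required $e-0$.

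For the inductive step, assume $\depth_R M\ge 1$ and pick an $M$-regular element $x\in\fm$. Applying the Koszul complex $K_\bullet(\underline{x})$ to the short exact sequence $0\to M\xrightarrow{x} M\to M/xM\to 0$ yields a long exact sequence of Koszul homology; by (i), multiplication by $x$ acts as zero on each $\rH_i(\underline{x};M)$, so this long sequence breaks into short exact sequences
$$0\to \rH_i(\underline{x};M)\to \rH_i(\underline{x};M/xM)\to \rH_{i-1}(\underline{x};M)\to 0.$$
Reading these from the top down gives $s(M/xM)=s(M)+1$. Since $\depth_R(M/xM)=\depth_R M-1$, the induction hypothesis applied to $M/xM$ yields $s(M)+1=e-(\depth_R M-1)$, i.e., $s(M)=e-\depth_R M$.

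The ``in particular'' statement is immediate on taking $M=R$, for which $e-\depth R=c$ by definition. I do not anticipate a serious obstacle: the only point that requires care is the annihilation property (i), which is classical and justifies the splitting of the long exact sequence. Since this lemma is a special case of a textbook result (e.g.\ Bruns--Herzog, Theorem~1.6.17), it is also reasonable simply to cite such a reference rather than redo the induction in full.
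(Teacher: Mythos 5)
Your proposal is correct: the paper's ``proof'' is just the citation to Bruns--Herzog, Theorem~1.6.17, which is exactly the option you mention at the end, and your inductive argument (top Koszul homology $\rH_e(\underline{x};M)\cong(0:_M\fm)$ for the base case, plus the splitting of the long exact sequence into short exact sequences because $\fm$ kills Koszul homology) is precisely the standard proof of that theorem specialized to $I=\fm$. No gaps; the only implicit hypotheses are $M\neq 0$ (so that $\rH_0(\underline{x};M)=M/\fm M\neq 0$ and $s(M)$ is defined) and finite generation, both assumed in the statement.
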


\begin{proof}
See \cite[Theorem 1.6.17]{BH98}.
\end{proof}

For an $R$-module $M$, $\soc_R(M)$ denotes the socle of $M$, that is, $\soc_R(M)=\{x\in M\mid \mathfrak{m}x=0\}$.
We have a lemma.

\begin{lemma} \label{l32}
Let $M$ be a finitely generated $R$-module. Let $n\ge 1$ be an integer. The following assertions hold:
\begin{enumerate}[\rm(1)]
\item $\rh_i(\syz_n^RM) \le \beta_{n-1}(M)\rh_i(R)+\rh_{i+1}(\syz_{n-1}^RM)$ for any $i>0$.
\item $\rh_0(\syz_n^RM) \le \rh_1(\syz_{n-1}^RM)$.
\item $\rh_0(\syz_n^RM)=\beta_n(M)$.
\item If $n\ge 2$,  $\rh_e(\syz_n^RM)=\beta_{n-1}(M)\rh_e(R)$.
\end{enumerate}
\end{lemma}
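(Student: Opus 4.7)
The plan is to extract all four assertions from a single long exact sequence. Starting from the standard short exact sequence
$$0 \to \syz_n^R M \to R^{\beta_{n-1}(M)} \to \syz_{n-1}^R M \to 0$$
arising from a minimal free resolution $F_\bullet\to M$, I would tensor over $R$ with the Koszul complex $K_\bullet$ on a minimal generating set $\underline{x}$ of $\fm$. Since $K_\bullet$ consists of free $R$-modules, this yields a short exact sequence of complexes, and hence a long exact sequence in Koszul homology
$$\cdots \to \rH_{i+1}(\underline{x};\syz_{n-1}^R M) \to \rH_i(\underline{x};\syz_n^R M) \to \rH_i(\underline{x};R)^{\beta_{n-1}(M)} \to \rH_i(\underline{x};\syz_{n-1}^R M) \to \cdots$$
Each of the four assertions will be read off from this sequence.

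For (1), taking $k$-dimensions at position $i>0$ gives the claimed inequality directly from exactness. For (3), it suffices to observe that $\rh_0(N)=\dim_k(N/\fm N)=\mu(N)$, and that by definition of a minimal free resolution $\mu(\syz_n^R M)=\beta_n(M)$. For (2), I would specialize the long exact sequence to $i=0$: the minimality of the surjection $R^{\beta_{n-1}(M)}\twoheadrightarrow \syz_{n-1}^R M$ ensures that the induced map $k^{\beta_{n-1}(M)}\to \syz_{n-1}^R M/\fm\syz_{n-1}^R M$ is an isomorphism, forcing the preceding connecting map $\rH_1(\underline{x};\syz_{n-1}^R M)\to \rH_0(\underline{x};\syz_n^R M)$ to be surjective.

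Part (4) is the step requiring the most care, and I expect it to be the main obstacle. First I would identify $\rH_e(\underline{x};N)\cong\soc_R(N)$, using that $K_e=R$ and that its differential is given (up to sign) by $\underline{x}$. Since $K_\bullet$ has length $e$, the term $\rH_{e+1}$ vanishes on every module, so the long exact sequence truncates to
$$0 \to \soc_R(\syz_n^R M) \to \soc_R(R)^{\beta_{n-1}(M)} \to \soc_R(\syz_{n-1}^R M).$$
For $n\geq 2$, minimality of the resolution forces $\syz_{n-1}^R M\subseteq\fm F_{n-2}$, and the differential $F_{n-1}\to F_{n-2}$ has entries in $\fm$. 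Elements of $\soc_R(R)^{\beta_{n-1}(M)}$ are annihilated by $\fm$ and are therefore sent to zero in $F_{n-2}$; consequently the rightmost map above vanishes, and the inclusion on the left becomes an isomorphism, yielding (4). The hypothesis $n\geq 2$ is essential here, since for $n=1$ the module $\syz_0^R M=M$ need not lie in $\fm$ times a free module.
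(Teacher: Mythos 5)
Your proposal is correct and follows essentially the same route as the paper: the short exact sequence $0 \to \syz_n^R M \to R^{\oplus\beta_{n-1}(M)} \to \syz_{n-1}^R M \to 0$ tensored with the Koszul complex, with (1)--(3) read off from the long exact sequence and minimality, and (4) from the identification $\rH_e(\underline{x};-)\simeq\soc_R(-)$ together with the fact that the differential has entries in $\fm$. The only cosmetic difference is in (4): the paper states the equality $\soc_R(\syz_n^R M)=\soc_R(R^{\oplus\beta_{n-1}(M)})$ directly and concludes $\gamma_e$ is an isomorphism, whereas you observe that the induced map on $\rH_e$ toward $\syz_{n-1}^R M$ vanishes, which is the same argument phrased through the truncated exact sequence.
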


\begin{proof}
(1) The $R$-modules $\syz_n^R M$ and $\syz_{n-1}^RM$ fit into a short exact sequence 
\[
0 \to \syz_n^R M \to R^{\oplus \beta_{n-1}(M)} \xrightarrow[]{\alpha} \syz_{n-1}^RM \to 0.
\]
Tensoring the sequence with the Koszul complex $K_\bullet$ and taking the homologies, we obtain a long exact sequence
\begin{align*}
0 & \to \mathrm{H}_e(\underline{x}; \syz_n^R M) \xrightarrow[]{\gamma_e} \mathrm{H}_e(\underline{x},R)^{\oplus \beta_{n-1}(M)} \to \mathrm{H}_e(\underline{x}; \syz_{n-1}^RM ) \\
& \to \mathrm{H}_{e-1}(\underline{x}; \syz_n^R M) \to \mathrm{H}_{e-1}(\underline{x},R)^{\oplus \beta_{n-1}(M)} \to \mathrm{H}_{e-1}(\underline{x}; \syz_{n-1}^RM )\\
&\cdots \\
& \to \mathrm{H}_1(\underline{x}; \syz_n^R M) \to \mathrm{H}_1(\underline{x},R)^{\oplus \beta_{n-1}(M)} \to \mathrm{H}_1(\underline{x}; \syz_{n-1}^RM )\\
& \to \mathrm{H}_0(\underline{x}; \syz_n^R M) \to \mathrm{H}_0(\underline{x},R)^{\oplus \beta_{n-1}(M)} \xrightarrow[]{\alpha_0} \mathrm{H}_0(\underline{x}; \syz_{n-1}^RM ) \to 0.
\end{align*}
Therefore, one has an inequality
\[
\dim_k \mathrm{H}_i(\underline{x};\syz_n^R M) \le \dim_k \mathrm{H}_i(\underline{x};R)^{\oplus \beta_{n-1}(M)} + \dim_k \mathrm{H}_{i+1}(\underline{x}; \syz_{n-1}^RM).
\]
This shows (1).

(2) $\alpha \otimes_R k$ is an isomorphism, and so is $\alpha_0$.
Therefore, one has a surjection $\mathrm{H}_1(\underline{x};\syz_{n-1}^RM) \to \mathrm{H}_0(\underline{x};\syz_n^R M)$. Taking the dimension of the $k$-vector spaces on the both sides, it yields the inequality.

(3) Observe that $\mathrm{H}_0(\underline{x};\syz_n^R M)$ is isomorphic to $\syz_n^R M \otimes_R k$. Thus $\rh_0(\syz_n^R M)$ is equal to $\beta_{n}(M)$.

(4) Since the matrix of the map $R^{\oplus \beta_{n-1}(M)}\rightarrow R^{\oplus \beta_{n-2}(M)}$ has all entries in $\fm$, the socle $\soc(R^{\oplus \beta_{n-1}(M)})$ is included in the kernel, i.e., $\soc(R^{\oplus \beta_{n-1}(M)})\subseteq \syz^R_n(M)$.
The opposite inclusion $\soc_R(\syz_n^RM) \subseteq \soc_R(R^{\oplus \beta_{n-1}(M)})$ is obvious.
We get the equality $\soc_R(\syz_n^RM)=\soc_R(R^{\oplus \beta_{n-1}(M)})$.
As a basic property of Koszul homology, we have a natural isomorphisms
$$\soc(M) \simeq 0:_M\fm \simeq H_e(\underline x; M).$$
It follows that the map
$
\gamma_e \colon \rH_e(\underline{x}; \syz_n^R M) \to \rH_e(\underline{x};R)^{\oplus \beta_{n-1}(M)}
$
 is an isomorphism.
\end{proof}
%
%
%

For simplicity, let $\tilde{\rh}_0(R)=0$ and $\tilde{\rh}_i(R)=\rh_i(R)$ for $i>0$.
The dimensions of the Koszul homology of the 0th, 1st, and 2nd syzygies of the residue field can be explicitly computed.

\begin{lemma} \label{l35}
Let $R$ be a Noetherian local ring.
Let $i\ge 0$ be an integer.
The following assertions hold:
\begin{enumerate}[\rm(1)]
\item $\rh_i(k)=\binom{e}{i}$.
\item $\rh_i(\syz_1^Rk)=\binom{e}{i+1}+\tilde{\rh}_i(R)$.
\item 
$\rh_i(\syz_2^Rk)=
\binom{e}{i+2}+\tilde{\rh}_{i+1}(R)+e\tilde{\rh}_i(R)$.
\end{enumerate}
\end{lemma}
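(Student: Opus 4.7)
The plan is to compute each $\rh_i$ by combining direct calculation for (1) with long exact sequences of Koszul homology for (2) and (3). Both (2) and (3) rest on a single technical fact: for any minimal surjection $\pi\colon R^r \twoheadrightarrow M$, the induced map $H_i(\underline x; R)^r \to H_i(\underline x; M)$ is zero for every $i \ge 1$. For (1) itself, since each $x_j \in \fm$ acts as zero on $k$, the complex $K_\bullet \otimes_R k$ has vanishing differentials, so $H_i(\underline x; k) \cong K_i \otimes_R k = \bigwedge^i k^e$ has dimension $\binom{e}{i}$.

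To establish the key fact, I would first show that every Koszul cycle of positive degree has all coefficients in $\fm$, i.e., $Z_i(K_\bullet) \subseteq \fm K_i$ for $i \ge 1$. Given $z = \sum_I a_I e_I \in Z_i$ with some $a_{I_0}$ a unit, pick $j_0 \in I_0$ and examine the $e_{I_0 \setminus \{j_0\}}$-component of $d(z) = 0$; this expresses $x_{j_0}$ as an $R$-linear combination of the remaining $x_k$, contradicting the minimality of $\underline x$. Since $\fm$ annihilates $H_i(\underline x; M)$, the image of any cycle $(z_1,\dots,z_r) \in Z_i(K_\bullet)^r$ under $\pi$ lies in $\fm \cdot (K_i \otimes_R M)$ and hence vanishes in $H_i(\underline x; M)$.

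For (2), I apply the long exact sequence of Koszul homology to $0 \to \fm \to R \to k \to 0$ (with $\fm = \syz_1^R k$). The key fact makes $H_i(\underline x; R) \to H_i(\underline x; k)$ vanish for $i \ge 1$, while $H_0(\underline x; R) \to H_0(\underline x; k)$ is the identity of $k$; splicing produces short exact sequences $0 \to H_i(\underline x; k) \to H_{i-1}(\underline x; \fm) \to H_{i-1}(\underline x; R) \to 0$ for $i \ge 2$, and the edge case $j = 0$ is handled by $\rh_0(\fm) = \dim_k \fm/\fm^2 = e$. For (3), the same strategy applied to $0 \to \syz_2^R k \to R^e \to \fm \to 0$ yields $0 \to H_{i+1}(\underline x; \fm) \to H_i(\underline x; \syz_2^R k) \to H_i(\underline x; R)^e \to 0$ for $i \ge 1$, and combining with (2) gives the stated formula. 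For $i = 0$, the induced map $k^e = H_0(\underline x; R)^e \to H_0(\underline x; \fm) = \fm/\fm^2$ is the isomorphism $e_j \mapsto \bar x_j$, forcing $H_0(\underline x; \syz_2^R k) \cong H_1(\underline x; \fm)$ and reducing to (2).

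The main technical obstacle is the containment $Z_i(K_\bullet) \subseteq \fm K_i$, which drives the vanishing of all the relevant boundary maps; fortunately it follows cleanly from the minimality of the generating set of $\fm$.
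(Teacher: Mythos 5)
Your ``key fact'' is false as stated, and this is a genuine gap since it is the engine behind both (2) and (3). Take $M=R$ and $r=1$: the identity $R\to R$ is a minimal surjection, yet the induced map on $\mathrm{H}_i(\underline{x};R)$ is the identity, which is nonzero for some $i\ge 1$ whenever $R$ is not regular. The failure point is exactly the inference you flag as the crux: from ``the image of the cycle lies in $\fm\cdot(K_i\otimes_R M)$'' you conclude ``hence it vanishes in $\mathrm{H}_i(\underline{x};M)$.'' But $\fm\,\mathrm{H}_i(\underline{x};M)=0$ only says that $m\cdot[\zeta]$ is a boundary for $m\in\fm$ and $\zeta$ a \emph{cycle}; an arbitrary cycle that happens to lie in $\fm(K_i\otimes_R M)$ need not be an $\fm$-combination of cycles, hence need not bound. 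Indeed your own containment $\mathrm{Z}_i(K_\bullet)\subseteq\fm K_i$ (which is correct, and is the same ingredient the paper takes from \cite[Lemma 4.1.6]{Avr98}) shows that for $M=R$ \emph{every} positive-degree cycle lies in $\fm K_i$ while $\mathrm{H}_i(\underline{x};R)\ne 0$ in general. Of the two instances you actually use, $M=k$ is fine (there the image chain is literally zero since $\fm(K_i\otimes k)=0$), but $M=\fm$ with $\pi=[x_1\ \dots\ x_e]$ --- the case needed for (3) --- is not covered by your argument.

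The correct route, which is what the paper does, is a factorization argument rather than a chain-level membership argument: because $\mathrm{Z}_i(\underline{x};R)\subseteq\fm K_i$, every class in $\mathrm{H}_i(\underline{x};R^{\oplus e})$ ($i\ge 1$) lifts to $\mathrm{H}_i(\underline{x};\fm^{\oplus e})$, i.e.\ the map $\mathrm{H}_i(\underline{x};\fm^{\oplus e})\to \mathrm{H}_i(\underline{x};R^{\oplus e})$ is surjective (this same surjectivity, for one copy, is what produces the short exact sequences in (2)); and the composite $\fm^{\oplus e}\hookrightarrow R^{\oplus e}\xrightarrow{[x_1\dots x_e]}\fm$ is $(m_1,\dots,m_e)\mapsto\sum_j x_jm_j$, which induces $\sum_j x_j\cdot(-)=0$ on $\mathrm{H}_i(\underline{x};\fm)$ --- here the annihilation $\fm\,\mathrm{H}_i(\underline{x};\fm)=0$ is applied to honest homology classes, which is legitimate. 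Combining the two gives the vanishing of $\mathrm{H}_i(\underline{x};R^{\oplus e})\to\mathrm{H}_i(\underline{x};\fm)$ for $i\ge1$. With that statement substituted for your key fact, the rest of your proposal (the long exact sequence bookkeeping, the edge case $\rh_0(\fm)=e$, and the $i=0$ case of (3) via the isomorphism $k^e\to\fm/\fm^2$) is correct and runs parallel to the paper's proof; your treatment of $\rh_0(\syz_2^Rk)$ through the long exact sequence is a mild variant of the paper's, which instead quotes $\rh_0(\syz_2^Rk)=\beta_2(k)=\binom{e}{2}+\rh_1(R)$.
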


\begin{proof}
(1) The $R$-complex $K(\underline{x})\otimes_R k$ has zero differentials.
Thus, $\mathrm{H}_i(\underline{x}; k)=K_i(\underline{x})\otimes_R k$.
In particular, $\rh_i(k)$ is equal to $\binom{e}{i}$.

(2) Note that the $R$-module $\syz_1^R k$ is just the maximal ideal $\fm$.
Due to Lemma \ref{l32}, we have $$\rh_0(\syz_1^R k)=\beta_1(k)=e+\tilde{\rh}_0(R).$$
Let $i$ be positive.
For an $R$-module $M$, we denote by $\mathrm{Z}_i(\underline{x}; M)$ the kernel of the $i$-th differential map $K_i(\underline{x})\otimes_R M \to K_{i-1}(\underline{x})\otimes_R M$ of the Koszul complex tensored with $M$.
Since $\underline{x}$ is a system of minimal generators of $\fm$, the submodule $\mathrm{Z}_i(\underline{x};R)$ of $K_i(\underline{x})$ is contained in $\fm K_i(\underline{x})$ \cite[Lemma 4.1.6]{Avr98}.
The canonical inclusion $\fm \hookrightarrow R$ induces a term-wise inclusion $K_*(\underline{x})\otimes_R \fm \hookrightarrow K_*(\underline{x})$ of complexes.
Via this inclusion, the complex  $K_*(\underline{x})\otimes_R \fm$ is identified with the subcomplex $\fm K_*(\underline{x})$ of the Koszul complex $K_*(\underline{x})$.
Under the identification, $\mathrm{Z}_i(\underline{x};M)$ coincides with $\mathrm{Z}_i(\underline{x};R)\cap  (K_i(\underline{x})\otimes_R \fm)=\mathrm{Z}_i(\underline{x};R)$.
Therefore, the induced homomorphism $\mathrm{H}_i(\underline{x}; \fm) \to \mathrm{H}_i(\underline{x};R)$ is surjective.
It follows that there is a short exact sequence
\[
0 \to \mathrm{H}_{i+1}(\underline{x};k) \to \mathrm{H}_i(\underline{x};\fm) \to \mathrm{H}_i(\underline{x};R) \to 0.
\]
The dimension $\rh_i(\fm)$ of the middle term is thus equal to $\rh_{i+1}(k)+\rh_i(R)$, which is $\binom{e}{i+1}+\tilde{\rh}_i(R)$ by (1).

(3) Due to Lemma \ref{l32} and \cite[Theorem 2.3.2]{BH98}, we have $$\rh_0(\syz_2^R k)=\beta_2(k)=\binom{e}{2}+\rh_1(R)+e\tilde{\rh}_0(R).$$
Assume that $i$ is positive.
The homomorphism $\syz_2^Rk \hookrightarrow R^{\oplus \beta_1(k)}$ factors through the inclusion $R^{\oplus \beta_1(k)}\otimes_R \fm \hookrightarrow R^{\oplus \beta_1(k)}$.
So we get a commutative diagram
\[
\xymatrix@C+1pc{
\syz_2^R k \ar[r] \ar[rd] & R^{\oplus \beta_1(k)} \ar[r]^{d_1} \ar[rd] & R\\
& \fm^{\oplus \beta_1(k)} \ar[r]^{d_1\otimes \fm} \ar[u] & \fm \ar[u]
}
\]
The first differential map $d_1$ in the minimal free resolution of $k$ is represented by the $1\times e$ matrix $[x_1~\dots x_e]$.
Since $\mathrm{H}_i(\underline{x};\fm)$ is annihilated by $\fm$, the induced homomorphism 
$$[x_1,\dots,x_e]=\mathrm{H}_i(d_1\otimes \fm)\colon \mathrm{H}_i(\underline{x}; \fm^{\oplus \beta_1(k)}) \to \mathrm{H}_i(\underline{x}; \fm)$$
 is a zero homomorphism.
On the other hand, as we argued in (2), we know that the induced homomorphism $\mathrm{H}_i(\underline{x};\fm^{\oplus \beta_1(k)}) \to \mathrm{H}_i(\underline{x}; R^{\oplus \beta_1(k)})$ is surjective.
Consequently, the homomorphism $\mathrm{H}_i(\underline{x};R^{\oplus \beta_1(k)}) \to \mathrm{H}_i(\underline{x};\fm)$ is a zero homomorphism.
From this, one may derive that there is a short exact sequence
\[
0 \to \mathrm{H}_{i+1}(\underline{x}; \fm) \to \mathrm{H}_i(\underline{x}; \syz_2^R k) \to \mathrm{H}_i(\underline{x}; R^{\oplus \beta_1(k)}) \to 0,
\]
which yields the equality $\rh_i(\syz_2^Rk)=\binom{e}{i+2}+\tilde{\rh}_{i+1}(R)+\beta_1(k)\tilde{\rh}_i(R)$.
\end{proof}

\begin{lemma} \label{l36}
Let $(R,\fm,k)$ be a Noetherian local ring and $x\in \fm \setminus \fm^2$ be a non-zero divisor on $R$.
Let $M$ be a finitely generated $R$-module such that $x$ is regular on $M$.
Then for any $n\ge 0$, we have $\rh_n^R(M)=\rh_n^{R/xR}(M/xM)$.
In particular, we have $\tilde{\rh}_n^R(R)=\tilde{\rh}_n^{R/xR}(R/xR)$ for any $n\ge 0$.
\end{lemma}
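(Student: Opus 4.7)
The plan is to extend $x$ to a minimal generating set $\underline{x} = x, x_2, \ldots, x_e$ of $\fm$, which is possible precisely because $x \in \fm \setminus \fm^2$. Set $\underline{x}' = x_2, \ldots, x_e$ and note that the images $\bar{x}_2, \ldots, \bar{x}_e$ form a minimal generating set of the maximal ideal $\fm/xR$ of $R/xR$. Hence both $\rh_n^R(M)$ and $\rh_n^{R/xR}(M/xM)$ are intrinsic Koszul invariants that can be computed along these particular generating sets.

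The key step is the standard tensor factorization of the Koszul complex,
$$K(\underline{x}; M) \;\cong\; K(x; R) \otimes_R K(\underline{x}'; R) \otimes_R M \;\cong\; K(\underline{x}'; R) \otimes_R K(x; M).$$
Since $x$ is a non-zero divisor on $M$, the two-term complex $K(x; M) = (0 \to M \xrightarrow{\;x\;} M \to 0)$ is a free resolution of $M/xM$, and is therefore quasi-isomorphic to $M/xM$ concentrated in degree $0$. Because $K(\underline{x}'; R)$ is a bounded complex of free $R$-modules, tensoring with it preserves quasi-isomorphisms, so
$$K(\underline{x}; M) \;\simeq\; K(\underline{x}'; R) \otimes_R M/xM \;=\; K(\underline{x}'; M/xM).$$
Now $M/xM$ is annihilated by $x$, so $K(\underline{x}'; M/xM)$ viewed as an $R$-complex coincides term-by-term and differential-by-differential with the Koszul complex $K(\bar{x}_2, \ldots, \bar{x}_e; M/xM)$ computed over $R/xR$. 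Taking $n$-th homology and $k$-dimensions then yields $\rh_n^R(M) = \rh_n^{R/xR}(M/xM)$ for all $n \geq 0$.

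The final assertion follows by specializing $M = R$, on which $x$ is regular by hypothesis; the tilde modification only overrides the $n=0$ term, where both $\rh_0$ values equal $1$ and hence both $\tilde{\rh}_0$ values equal $0$, so the equality survives. There is no serious obstacle in this proof — the whole argument is essentially bookkeeping about Koszul complexes. The only point to handle with some care is the identification of $K(\underline{x}'; M/xM)$ over $R$ with the Koszul complex of $\bar{x}_2, \ldots, \bar{x}_e$ on $M/xM$ over $R/xR$, which is routine but is what transports the quasi-isomorphism across the change of base ring and thereby supplies the content of the lemma.
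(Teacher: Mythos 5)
Your proof is correct, and it is essentially the argument underlying the references the paper cites: the paper's proof is simply the citation to \cite[Proposition 1.6.7 and Corollary 1.6.13]{BH98}, which amount to the Koszul tensor factorization $K(\underline{x};M)\cong K(\underline{x}';R)\otimes_R K(x;M)$, the observation that $K(x;M)\xrightarrow{\sim}M/xM$ when $x$ is $M$-regular, and the base-change identification of $K(\underline{x}';M/xM)$ over $R$ with the Koszul complex of $\bar{x}_2,\dots,\bar{x}_e$ over $R/xR$. You have spelled out precisely this argument, so the approach agrees with the paper's in all but the level of detail provided.
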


\begin{proof}
We refer to \cite[Proposition 1.6.7 and Corollary 1.6.13]{BH98}.
\end{proof}

\begin{proposition} \label{l37}
Let $(R,\fm,k)$ be a Noetherian local ring of codepth $c$. For integers $n\ge 0$ and $m\ge c$, we have
\begin{equation*}
\rh_m(\syz_n^Rk)=\beta_{n-1}^R(k)\tilde{\rh}_m(R)+\binom{e}{n+m}.
\end{equation*}
Here we treat $\beta_{-1}^R(k)=0$.
\end{proposition}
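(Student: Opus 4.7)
The plan is to proceed by induction on the depth $d := \depth R$. For the base case $d = 0$, the ring $R$ is Artinian with $c = e$, so only the value $m = e$ requires checking (everything vanishes for $m > e$). There $\binom{e}{e+n} = 0$ for $n \ge 1$, and the formula reduces to $\rh_e(\syz_n^R k) = \beta_{n-1}^R(k) \rh_e(R)$ for $n \ge 1$ and to $\rh_e(k) = 1$ for $n = 0$; these are supplied by Lemma \ref{l32}(4) (for $n \ge 2$), Lemma \ref{l35}(2) (for $n = 1$), and Lemma \ref{l35}(1) (for $n = 0$).

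For the inductive step $d \ge 1$, I would invoke prime avoidance to produce $y \in \fm \setminus \fm^2$ that is a non-zero divisor on $R$, and set $\bar R := R/yR$. Then $\bar e = e - 1$, $\depth \bar R = d - 1$, and crucially the codepth is preserved: $\bar c = c$. The case $n = 0$ is immediate from Lemma \ref{l35}(1). For $n \ge 1$, since $\syz_n^R k$ embeds in the free module $R^{\beta_{n-1}(k)}$, the element $y$ is regular on $\syz_n^R k$, and Lemma \ref{l36} combined with Lemma \ref{l27} yields
\[
\rh_m^R(\syz_n^R k) = \rh_m^{\bar R}(\syz_n^{\bar R} k) + \rh_m^{\bar R}(\syz_{n-1}^{\bar R} k).
\]

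The induction hypothesis applied to $\bar R$ (at the same codepth $c = \bar c$) evaluates each summand on the right by the asserted formula over $\bar R$. Collecting terms then reduces to the formula over $R$ via three elementary identities: $\tilde\rh_m^R(R) = \tilde\rh_m^{\bar R}(\bar R)$ from Lemma \ref{l36}, Pascal's identity $\binom{\bar e}{m+n-1} + \binom{\bar e}{m+n} = \binom{e}{m+n}$, and the relation $\beta_i^R(k) = \beta_i^{\bar R}(k) + \beta_{i-1}^{\bar R}(k)$ (equivalently $P^R_k(t) = (1+t)\, P^{\bar R}_k(t)$), which is obtained from Lemma \ref{l27} by counting minimal generators.

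The main technical obstacle this depth reduction sidesteps is proving directly, at the single exponent $m = c$ with $1 \le c < e$, that the connecting map $H_c(\underline x; R^{\beta_{n-1}(k)}) \to H_c(\underline x; \syz_{n-1}^R k)$ from the long exact sequence associated to $0 \to \syz_n^R k \to R^{\beta_{n-1}(k)} \to \syz_{n-1}^R k \to 0$ vanishes; such a vanishing would be needed to upgrade the upper bound in Lemma \ref{l32}(1) to an equality. Inducting on depth avoids this: the analogous vanishing is required only at the top degree $m = e$ in the Artinian base case, where it is supplied by the socle identification $\soc_R(\syz_n^R k) = \soc_R(R^{\beta_{n-1}(k)})$ used in the proof of Lemma \ref{l32}(4).
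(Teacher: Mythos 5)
Your proposal is correct and follows essentially the same route as the paper: handle $n=0,1$ (and the depth-zero base case, where Lemma \ref{l32}(4) gives the top Koszul homology via the socle identification) by Lemmas \ref{l35} and \ref{l32}, then induct on $\depth R$ by cutting with a nonzerodivisor in $\fm\setminus\fm^2$ and combining Lemmas \ref{l27} and \ref{l36} with Pascal's identity and the additivity $\beta_i^R(k)=\beta_i^{R/yR}(k)+\beta_{i-1}^{R/yR}(k)$. The only differences are organizational (the paper treats $n=0,1$ once for all depths before the induction), so no changes are needed.
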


\begin{proof}
If $n=0$ or $n=1$, the assertion follows from Lemma \ref{l35}. Hence, we may assume $n\geq 2$.

The proposition is proved by induction on $t=\depth(R)$. If $t=0$ then the assertion follows from Lemma \ref{l32}(4). Suppose $t>0$. Pick an element $x\in \fm\setminus \fm^2$ such that $x$ is regular on $R$. By Lemma \ref{l36} and \ref{l27}, we have
\begin{equation*}
\rh_m^R(\syz_n^R k)=\rh_m^{R/xR}(\syz_n^{R/xR}k)+\rh_m^{R/xR}(\syz_{n-1}^{R/xR}k).
\end{equation*}
Since $\depth R/xR=t-1$, the induction hypothesis applies.
Therefore, we get 
\begin{align*}
\rh_m^{R/xR}(\syz_n^{R/xR}k)&=\beta_{n-1}^{R/xR}(k)\tilde{\rh}_m^{R/xR}(R/xR)+\binom{e-1}{n+m},\text{ and} \\ \rh_m^{R/xR}(\syz_{n-1}^{R/xR}k)&=\beta_{n-2}^{R/xR}(k)\tilde{\rh}_m^{R/xR}(R/xR)+\binom{e-1}{n+m-1}.
\end{align*}
Lemma \ref{l27} and \ref{l36} say $\beta_{n-1}^R(k)=\beta_{n-1}^{R/xR}(k)+\beta_{n-2}^{R/xR}(k)$ and $\tilde{\rh}_m^R(R)=\tilde{\rh}_m^{R/xR}(R/xR)$ respectively.
We now obtain the desired equality:
\begin{align*}
\rh_m(\syz_n^Rk)&=\beta_{n-1}^{R/xR}(k)\tilde{\rh}_m^{R/xR}(R/xR)+\beta_{n-2}^{R/xR}(k)\tilde{\rh}_m^{R/xR}(R/xR)+\binom{e-1}{n+m}+\binom{e-1}{n+m-1}\\
&=\beta_{n-1}^R(k)\tilde{\rh}_m(R)+\binom{e}{n+m}.
\end{align*}
\end{proof}

Combining the lemmas yields the following useful consequences.

\begin{corollary}\label{cor32}
Let $(R,\fm,k)$ be a Noetherian local ring with $c>0$.
\begin{enumerate}[\rm(1)]
\item 
For any $n\ge 0$, we have
\begin{equation} \label{eq37}
\beta_n^R(k) \le \sum_{j=1}^c\beta_{n-j-1}^R(k)\rh_j(R)+\binom{e}{n}.
\end{equation}
\item Let $n\ge 0$ be fixed. 
The following conditions are equivalent:
\begin{enumerate}[\rm(a)]
\item The equality in \eqref{eq37} holds.
\item The equality
\begin{equation*}
\rh_i(\syz_{n-i}^Rk)=\left\{\begin{array}{ll}
\rh_1(\syz_{n-1}^Rk) & (i=0)\\
\beta_{n-i-1}^R(k)\rh_i(R)+\rh_{i+1}(\syz_{n-i-1}^Rk) & (i>0)
\end{array}
\right.
\end{equation*}
holds for any $i=0,\dots,n-1$.
\item The equality
\begin{equation}
\rh_i(\syz_{n-i}^Rk)=
\sum_{j=i}^{c}\beta_{n-j-1}^R(k)\tilde{\rh}_{j}(R)+\binom{e}{n}
\end{equation}
holds for any $i=0,\dots,n-1$.
\end{enumerate}
Here we treat $\syz_{-a}^R(k)=0$ and $\beta_{-a}^R(k)=0$ for any $a>0$.
\end{enumerate}
\end{corollary}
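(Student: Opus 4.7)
My plan is to prove (1) by a single telescoping estimate chained from Lemma \ref{l32} and terminated by Proposition \ref{l37}. Starting from $\beta_n^R(k)=\rh_0(\syz_n^R k)$ (Lemma \ref{l32}(3)), Lemma \ref{l32}(2) gives $\rh_0(\syz_n^R k)\le \rh_1(\syz_{n-1}^R k)$, and then iterated use of the bound
\[
\rh_i(\syz_{n-i}^R k)\le \beta_{n-i-1}^R(k)\rh_i(R)+\rh_{i+1}(\syz_{n-i-1}^R k)
\]
from Lemma \ref{l32}(1) for $i=1,\dots,c-1$ telescopes to
\[
\beta_n^R(k)\le \sum_{i=1}^{c-1}\beta_{n-i-1}^R(k)\rh_i(R)+\rh_c(\syz_{n-c}^R k).
\]
Proposition \ref{l37} evaluates the residual as $\beta_{n-c-1}^R(k)\rh_c(R)+\binom{e}{n}$, producing (1). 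When $n\le c$ the iteration stops at $i=n-1$ and $\rh_n(\syz_0^R k)=\rh_n(k)=\binom{e}{n}$ is supplied instead by Lemma \ref{l35}(1); the conventions $\beta_{-a}^R(k)=0$ for $a>0$ and $\rh_j(R)=0$ for $j>c$ (Lemma \ref{l31}) keep the two regimes uniform.

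For part (2) I would trace when these inequalities collapse. Since (1) uses only the estimates from Lemma \ref{l32}(1),(2), equality in (a) is equivalent to every one of them being an equality on the range $i=0,\dots,\min(c-1,n-1)$, which is precisely condition (b) on that range. For $i\ge c$ with $i\le n-1$ the equality in (b) holds automatically, because Proposition \ref{l37} forces
$\rh_i(\syz_{n-i}^R k)-\rh_{i+1}(\syz_{n-i-1}^R k)=\beta_{n-i-1}^R(k)\tilde{\rh}_i(R)=\beta_{n-i-1}^R(k)\rh_i(R)$
(using $\tilde{\rh}_i(R)=0$ for $i>c$). This yields (a)$\Leftrightarrow$(b).

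For (b)$\Leftrightarrow$(c), a downward induction on $i$ suffices: Proposition \ref{l37} supplies (c) at every $i\ge c$ as a free base case, and Lemma \ref{l35}(2) supplies (c) at the alternative base $i=n-1$ in the range $n\le c$. Given (c) at level $i+1$, substituting into the recursion of (b) at level $i$ produces (c) at level $i$ after invoking $\tilde{\rh}_i(R)=\rh_i(R)$ for $i\ge 1$ and $\tilde{\rh}_0(R)=0$; conversely, subtracting (c) at level $i+1$ from (c) at level $i$ returns (b) at level $i$.

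The main obstacle I foresee is essentially bookkeeping: keeping the indexing range of the telescope, the sums appearing in (a) and (c), and the conventions $\syz_{-a}^R k=0$ and $\beta_{-a}^R(k)=0$ mutually consistent across the regimes $n>c$ and $n\le c$, and ensuring that the $i=0$ slot of (b)---which comes from Lemma \ref{l32}(2) rather than (1)---fits cleanly into the inductive equivalence with (c) via the identity $\tilde{\rh}_0(R)=0$.
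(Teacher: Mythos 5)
Your proposal is correct and follows essentially the same route as the paper: telescoping Lemma \ref{l32}(2),(3) and (1) and closing the chain with Proposition \ref{l37} (or Lemma \ref{l35}(1) when $n\le c$) for part (1), then reading off (a)$\Leftrightarrow$(b) from equality at each step of the chain and getting (b)$\Leftrightarrow$(c) from the recurrence. The paper merely states these last two equivalences without detail ("follows directly", "basic recurrence formula"), and your expanded bookkeeping, including the automatic validity of (b) and (c) in the range $i\ge c$ via Proposition \ref{l37} and Lemma \ref{l31}, fills that in correctly.
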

\begin{proof}
First, we deal with the case $n\ge c$.
In view of Lemma \ref{l32} the following inequalities hold true 
\begin{align*}
\beta_n^R(k) &\le \rh_1(\syz_{n-1}^Rk)\\
&\le \beta_{n-2}^R(k)\rh_1(R)+\rh_2(\syz_{n-2}^Rk)\\
&\le \ldots\\
&\le \sum_{j=1}^{c-1}\beta_{n-j-1}^R(k)\rh_j(R)+\rh_{c}(\syz_{n-c}^Rk)\\
&= \sum_{j=1}^c\beta_{n-j-1}^R(k)\rh_j(R)+\binom{e}{n}.
\end{align*}
Here the last equality is obtained by applying Lemma \ref{l37}.

In the case $n=0$, the assertion is obvious.

Next, we consider the case $0<n<c$.
A similar argument as above yields the following inequalities:
\begin{align*}
\beta_n^R(k) &\le \rh_1(\syz_{n-1}^Rk)\\
&\le \beta_{n-2}^R(k)\rh_1(R)+\rh_2(\syz_{n-2}^Rk)\\
&\le \ldots\\
&\le \sum_{j=1}^{n-1}\beta_{n-j-1}^R(k)\rh_j(R)+\rh_{n}(\syz_0^Rk)\\
&= \sum_{j=1}^{n-1}\beta_{n-j-1}^R(k)\rh_j(R)+\binom{e}{n}.
\end{align*}
Here the last equality is obtained by applying Lemma \ref{l35} (1).
Adding $0=\sum_{j=n}^c \beta_{n-j-1}^R(k)\rh_j(R)$ to both sides, we obtain the desired inequality.

The equivalence (a)$\Leftrightarrow$(b) in (2) then follows directly. The equivalence (b)$\Leftrightarrow$(c) in (2) follows from the basic recurrence formula.
\end{proof}

Recall that $t=\depth R$ and $c=e-t$.
Therefore, if $t=0$, we have $e=c$.

\begin{proposition} \label{p39a}
Let $(R,\fm,k)$ be a Noetherian local ring with $t=0$ and $e>0$.
For integers $l,m,n$, consider the following conditions:
\begin{equation} \label{eqB}
\beta_n^R(k)=\sum_{j=1}^e\beta_{n-j-1}^R(k)\rh_j(R)+\binom{e}{n}, \tag{$\text{B}_n^R$}
\end{equation}
and
\begin{equation} \label{eqH}
\rh_m(\syz_{e+l+1}^Rk)=\sum_{j=1}^e\rh_m(\syz_{e+l-j}^Rk)\rh_j(R). \tag{$\text{H}_{m,l}^R$}
\end{equation}
Let $a,b \ge 0$ be integers such that $a\le e$.
\begin{enumerate}[\rm(1)]
\item Assume that $\emph{(B}_n^R\emph{)}$ holds for any $n\ge a+b$.
Then $\emph{(H}_{a,b}^R\emph{)}$ holds.
\item Assume that $\emph{(B}_n^R\emph{)}$ holds for any $n\ge a+1$, and that $\emph{(H}_{a,0}^R\emph{)}$ holds.
Then $\emph{(B}_a^R\emph{)}$ holds.
\end{enumerate}
\end{proposition}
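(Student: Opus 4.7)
The plan is to convert $(B_n^R)$ into an exact formula for $\rh_a(\syz_{n-a}^R k)$ via Corollary \ref{cor32}(2)(c), and then expand both sides of the identity $(H_{a,l}^R)$ as polynomial expressions in the Betti numbers $\beta_*^R(k)$ and the Koszul dimensions $\rh_j(R)$. Combining Corollary \ref{cor32}(2)(c) with $(B_n^R)$ itself produces the master formula: whenever $(B_n^R)$ holds and $0 \le i \le n-1$, one has
\[
\rh_i(\syz_{n-i}^R k) = \beta_n^R(k) - \sum_{j=1}^{i-1}\beta_{n-j-1}^R(k)\,\rh_j(R).
\]

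For Part (1), the case $a=0$ is immediate because $(H_{0,b}^R)$ is literally $(B_{e+b+1}^R)$, which is covered by the hypothesis. For $a \ge 1$, I would apply the master formula to the left-hand side with $(n,i)=(e+b+a+1,\,a)$, and to each term $\rh_a(\syz_{e+b-j}^R k)$ of the right-hand side with $(n,i)=(e+b+a-j,\,a)$; the hypothesis $n\ge a+b$ is satisfied in every case since $1\le j \le e$. Expanding, switching the order of summation in the resulting double sum, and collapsing the inner sums by applying $(B_n^R)$ once more (at $n=e+b+a+1$ and at $n=e+b+a-j'$ for $j'\in\{1,\dots,a-1\}$), both sides reduce to $\sum_{j=a}^{e}\beta_{e+b+a-j}^R(k)\,\rh_j(R)$, establishing $(H_{a,b}^R)$.

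For Part (2), I would repeat the computation with $b=0$; the hypothesis $(B_n^R)$ for $n\ge a+1$ still covers the left-hand side and every term $\rh_a(\syz_{e-j}^R k)$ with $j\in\{1,\dots,e-1\}$. The key difference is the boundary term $j=e$, which corresponds to $\syz_0^R k = k$: because $(B_a^R)$ is exactly what we wish to prove, we cannot use the master formula there, and must instead evaluate $\rh_a(k) = \binom{e}{a}$ directly via Lemma \ref{l35}(1). After the same bookkeeping, the identity $(H_{a,0}^R)$ reduces to
\[
\rh_e(R)\Bigl(\beta_a^R(k) - \textstyle\binom{e}{a} - \sum_{j=1}^{a-1}\beta_{a-j-1}^R(k)\,\rh_j(R)\Bigr) = 0.
\]
The hypothesis $\depth R = 0$ forces $c=e$, so Lemma \ref{l31} gives $\rh_e(R)\ne 0$; dividing through and extending the remaining sum up to $j=e$ (the extra terms vanish because $\beta_{a-j-1}^R(k)=0$ for $j\ge a$) yields precisely $(B_a^R)$.

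The main obstacle is tracking carefully which instances of $(B_n^R)$ are actually available during the expansion. In Part~(2) the unavailable instance is exactly the conclusion $(B_a^R)$; the absence of the master formula at the single boundary term $j=e$ is what produces the uncancelled contribution $\binom{e}{a}\rh_e(R)$, and the hypothesis $\depth R = 0$ enters through $\rh_e(R)\ne 0$, which is what lets us divide and isolate $(B_a^R)$.
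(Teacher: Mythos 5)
Your proof is correct and is essentially the paper's own argument: both convert the hypotheses $(\text{B}_n^R)$ into closed formulas for $\rh_a$ of the relevant syzygies via Corollary \ref{cor32}(2)(c) (your ``master formula'' is just that corollary with $(\text{B}_n^R)$ substituted in), collapse the resulting double sums using further instances of $(\text{B}_n^R)$ while the stray binomial coefficients vanish, evaluate the boundary term $\syz_0^Rk=k$ by Lemma \ref{l35}(1), and finish part (2) by dividing by $\rh_e(R)\neq0$, which comes from $\depth R=0$ via Lemma \ref{l31}. The only point to patch is part (1) in the case $b=0$, $j=e$, where $(n,i)=(a,a)$ violates the master formula's side condition $i\le n-1$ (your claim that ``$n\ge a+b$ is satisfied in every case'' overlooks this); but since $(\text{B}_a^R)$ is then part of the hypothesis, the direct evaluation $\rh_a(k)=\binom{e}{a}$ from Lemma \ref{l35}(1) combined with $(\text{B}_a^R)$ gives $\rh_a(k)=\beta_a^R(k)-\sum_{j=1}^{a-1}\beta_{a-j-1}^R(k)\rh_j(R)$, so the formula still holds for that term and the computation is unaffected.
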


\begin{proof}
(1): 
In view of Corollary \ref{cor32}, it follow from $\text{(B}_{n}^R\text{)}$ for $n\ge a+b$ that
\begin{equation}
\rh_a(\syz_{e+b+1}^Rk)=\sum_{j=a}^e\beta_{a+e+b-j}^R(k)\tilde{\rh}_j(R),
\end{equation}
and that
\begin{equation} \label{eq36}
\rh_a(\syz_{e+b-s}^Rk)=\sum_{j=a}^e\beta_{a+e+b-s-j-1}^R(k)\tilde{\rh}_{j}(R)+\binom{e}{a+e+b-s} \quad \text{for any $s\le e$.}
\end{equation}
Therefore, we obtain
\begin{align} \label{eqe37}
&\rh_a(\syz_{e+b+1}k)-\sum_{s=1}^e\rh_a(\syz_{e+b-s}k)\rh_s(R)\\
&=\sum_{j=a}^e\left(\beta_{a+e+b-j}(k)\tilde{\rh}_j(R)-\sum_{s=1}^e\beta_{a+e+b-s-j-1}(k)\tilde{\rh}_j(R)\rh_s(R)\right)-\sum_{s=1}^e\binom{e}{a+e+b-s}\rh_s(R) \notag\\
&=\sum_{j=a}^{e}\left(\beta_{a+e+b-j}(k)-\sum_{s=1}^e\beta_{a+e+b-s-j-1}(k)\rh_s(R)-\binom{e}{a+e+b-j}\right)\tilde{\rh}_j(R) \notag\\
&+\sum_{j=a}^e\binom{e}{a+b+e-j}\tilde{\rh}_j(R)-\sum_{s=1}^e\binom{e}{a+e+b-s}\rh_s(R) \notag
\end{align}
Substituting $\text{(B}_{n}^R\text{)}$ for $n=a+b,\dots,a+b+e-1$ into \eqref{eqe37} yields
\begin{align}
&\rh_a(\syz_{c+b+1}k)-\sum_{s=1}^c\rh_a(\syz_{c+b-s}k)\rh_s(R)=-\sum_{j=1}^{a-1}\binom{e}{a+b+e-j}\rh_j(R)=0.
\end{align}
This means that $\text{(H}_{a,b}\text{)}$ holds.

(2): In view of Corollary \ref{cor32}, it follows from $\text{(B}_{n}^R\text{)}$ for $n\ge a+1$ that
\begin{equation}
\rh_a(\syz_{e+1}k)=\sum_{j=a}^e\beta_{a+e-j}(k)\tilde{\rh}_j(R),
\end{equation}
and that
\begin{equation} \label{eq36a}
\rh_a(\syz_{e-s}k)=\sum_{j=a}^e\beta_{a+e-s-j-1}(k)\tilde{\rh}_{j}(R)+\binom{e}{a+e-s} \quad \text{for any $s< e$.}
\end{equation}
Note that, due to Lemma \ref{l35} (1), the equality \eqref{eq36a} also holds for $s=e$.
Therefore, as a similar argument to (1), we obtain
\begin{align} \label{eqe37a}
&\rh_a(\syz_{e+1}k)-\sum_{s=1}^e\rh_a(\syz_{e-s}k)\rh_s(R)\\
&=\sum_{j=a}^e\left(\beta_{a+e-j}(k)\tilde{\rh}_j(R)-\sum_{s=1}^e\beta_{a+e-s-j-1}(k)\tilde{\rh}_j(R)\rh_s(R)\right)-\sum_{s=1}^e\binom{e}{a+e-s}\rh_s(R) \notag\\
&=\left(\beta_{a}(k)-\sum_{s=1}^e\beta_{a-s-1}(k)\rh_s(R) -\binom{e}{a}\right)\rh_e(R) \notag\\
&+\sum_{j=a}^{e-1}\left(\beta_{a+e-j}(k)-\sum_{s=1}^e\beta_{a+e-s-j-1}(k)\rh_s(R)-\binom{e}{a+e-j}\right)\tilde{\rh}_j(R) \notag\\
&+\sum_{j=a}^e\binom{e}{a+e-j}\tilde{\rh}_j(R)-\sum_{s=1}^e\binom{e}{a+e-s}\rh_s(R) \notag
\end{align}
Substituting $\text{(B}_{n}^R\text{)}$ for $n\ge a+1$ and $\text{(H}_{a,0}\text{)}$ into \eqref{eqe37a} yields
\begin{align}
0=\left(\beta_{a}(k)\rh_e(R)-\sum_{s=1}^e\beta_{a-s-1}(k)\rh_s(R) -\binom{e}{a}\right)\rh_e(R).
\end{align}
Since $\rh_e(R)\not=0$, we achieve $\text{(B}_{a}^R\text{)}$.
\end{proof}

We state the following numerical criterion of Golodness directly derived from the definition.

\begin{lemma} \label{l26}
A Noetherian local ring $R$ is Golod if and only if the equality \eqref{eqB}
holds true for any integer $n\ge 0$.
\end{lemma}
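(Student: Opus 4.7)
The plan is to translate the Golod condition, which is an equality of formal power series, into a coefficient-by-coefficient identity. The recurrence \eqref{eqB} then appears directly as the coefficient of $t^n$ in that identity.

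First, I unravel the definition: by Serre's inequality \eqref{equ21}, the ring $R$ is Golod precisely when
$$P^R_k(t) = \frac{(1+t)^{e}}{1 - t\sum_{j=1}^{c} \rh_j(R)\, t^j}$$
as formal power series in $\bZ[[t]]$. Since the denominator has constant term $1$ and is therefore a unit in $\bZ[[t]]$, this equality of series is equivalent to the polynomial-coefficient identity
$$P^R_k(t)\cdot\Bigl(1 - \sum_{j=1}^{c} \rh_j(R)\, t^{j+1}\Bigr) = (1+t)^e.$$

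Next, I compare coefficients of $t^n$ on both sides. The right-hand side contributes $\binom{e}{n}$, with the standard convention that $\binom{e}{n}=0$ when $n>e$. The left-hand side contributes $\beta_n^R(k) - \sum_{j=1}^{c} \rh_j(R)\,\beta_{n-j-1}^R(k)$, where I use the convention $\beta_{-a}^R(k)=0$ for $a>0$ adopted earlier in Corollary \ref{cor32}. Since $\rh_j(R)=0$ for $j>c$ by Lemma \ref{l31}, I can freely extend the summation up to $j=e$ without altering the expression. Equality of the two formal power series is therefore equivalent to the recurrence \eqref{eqB} holding for every $n\ge 0$.

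I do not foresee any substantial obstacle. The only points requiring attention are purely bookkeeping: ensuring that the convention $\beta_{-a}^R(k)=0$ correctly handles the initial terms $n=0,1,\dots,c$ (for instance the $n=0$ case gives $\beta_0^R(k)=1=\binom{e}{0}$, matching), and checking that extending the summation range from $c$ to $e$ is harmless. Once these are addressed, the argument reduces to a routine formal-power-series manipulation, and the tautological fact that two elements of $\bZ[[t]]$ are equal if and only if all their coefficients agree concludes the proof.
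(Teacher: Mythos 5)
Your proposal is correct and takes essentially the same route as the paper: rewrite the Golod equality $P^R_k(t)=(1+t)^e/(1-t\sum_{j=1}^c\rh_j(R)t^j)$ by clearing the denominator (a unit in $\bZ[[t]]$) to get $P^R_k(t)=(1+t)^e+P^R_k(t)\sum_{j=1}^c\rh_j(R)t^{j+1}$, then observe that comparing coefficients of $t^n$ gives exactly $(\text{B}_n^R)$. The paper states this equivalence in one line; you merely spell out the bookkeeping (the unit observation, the conventions $\beta_{-a}^R(k)=0$ and $\rh_j(R)=0$ for $j>c$) a bit more explicitly.
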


\begin{proof}
In view of \eqref{equ21}, $R$ is Golod exactly when the equality below holds:
\[
\mathrm{P}_k^R(t)=(1+t)^e+\mathrm{P}_k^R(t)\sum_{j=1}^c\rh_j(R)t^{j+1}.
\]
The equality $\text{(B}_n^R\text{)}$ is equivalent to this.
\end{proof}

Now comes the main result of this section. 

\begin{theorem}\label{characterizationGolod}
Let $(R,\fm,k)$ be a Noetherian local ring of embedding dimension $e$, and let $K_{\bullet}$ be the Koszul complex on a minimal set of generators of $\fm$.
Put $\rh_i(R)=\dim_k \mathrm{H}_i(K_\bullet)$.
The following statements are equivalent.
\begin{itemize}
\item[(a)] The ring $R$ is Golod.
\item[(b)] There is an isomorphism
\begin{equation} \label{generalGolod23}
\syz_{e+1}^R(k) \simeq \bigoplus_{j = 1}^{c} \syz_{e-j}^R(k)^{\oplus \rh_{j}(R)}.
\end{equation}
\item[(c)] For each $m\ge 0$, there are isomorphisms
\begin{equation} \label{eq315}
\syz_{e+1+m}^R(k) \simeq \bigoplus_{j=1}^{c} \syz_{e-j+m}^R(k)^{\oplus  \rh_{j}(R)}.
\end{equation}
\end{itemize}
\end{theorem}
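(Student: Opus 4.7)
The implications other than (b) $\Rightarrow$ (a) are formal: (a) $\Rightarrow$ (c) is Theorem \ref{generalGolod}, (c) $\Rightarrow$ (b) is the case $m=0$, and (b) $\Rightarrow$ (c) follows by iteratively applying the syzygy functor to both sides of the isomorphism in (b), using that a minimal free resolution of a finite direct sum of modules is the direct sum of the individual minimal free resolutions. Hence the crux of the theorem is (b) $\Rightarrow$ (a).

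My plan is to feed the iso in (b) through additive invariants to extract numerical data, then invoke Proposition \ref{p39a}. Applying $\Tor_*^R(-,k)$ gives $\beta_{n+e+1}^R(k) = \sum_{j=1}^c \beta_{n+e-j}^R(k)\,\rh_j(R)$ for all $n \geq 0$, which is condition $(\mathrm{B}_m^R)$ for every $m \geq e+1$. Taking Koszul homology $\rH_*(K_\bullet \otimes_R -)$ gives $\rh_m(\syz_{e+1}^R(k)) = \sum_{j=1}^c \rh_m(\syz_{e-j}^R(k))\,\rh_j(R)$ for all $m \geq 0$, which is condition $(\mathrm{H}_{m,0}^R)$. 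In the depth-zero case $t = 0$ (so $e = c$), I iterate Proposition \ref{p39a}(2) by downward induction on $a$ from $a = e$ down to $a = 0$: the inputs $(\mathrm{B}_n^R)$ for $n \geq a+1$ and $(\mathrm{H}_{a,0}^R)$ yield $(\mathrm{B}_a^R)$, and Lemma \ref{l26} then concludes Golodness.

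For $t > 0$, I reduce to the depth-zero case via Proposition \ref{p31}. Let $\underline x = x_1,\dots,x_t$ be a maximal $\fm \setminus \fm^2$-regular sequence on $R$ and put $S = R/\underline x R$, which has depth zero and embedding dimension $c$. Combining Corollary \ref{c26} with the iso in (b), and identifying $\rh_j(R) = \rh_j(S)$ via Lemma \ref{l36}, gives the $S$-module isomorphism
\[
\bigoplus_{i=0}^{t} \syz_{e+1-i}^S(k)^{\oplus \binom{t}{i}} \simeq \bigoplus_{j=1}^{c} \bigoplus_{i=0}^{t} \syz_{e-j-i}^S(k)^{\oplus \binom{t}{i}\rh_j(S)}.
\]
Applying $\beta_n(-)$ gives a $\binom{t}{i}$-weighted sum of Serre-type inequalities for $S$, each with vanishing binomial correction in the relevant range. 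Equality of the total sum together with strict positivity of the weights $\binom{t}{i}$ forces each constituent inequality to be saturated, so $(\mathrm{B}_m^S)$ holds for every $m \geq c+1$. For the Koszul inputs $(\mathrm{H}_{m,0}^S)$, I apply Proposition \ref{p39a}(1) to $S$ to obtain $(\mathrm{H}_{a,b}^S)$ for $a+b \geq c+1$; this handles the high-degree summands in the analogous $\rh_m$-weighted sum equation, and the remaining low-degree identities fall out numerically. The depth-zero argument applied to $S$, together with iterated Proposition \ref{p31}, then yields that $R$ is Golod.

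The main obstacle is extracting $(\mathrm{H}_{m,0}^S)$ for small $m$. The iso modulo $\underline x$ does not split as $S$-modules into its natural ``(b) plus shifted (b)'' components (Krull--Schmidt may fail for $S$), so the separation must be performed arithmetically. The delicate point is to check that Proposition \ref{p39a}(1) combined with the Lemma \ref{l32} upper bounds for $\rh_m(\syz_n^S(k))$ genuinely suffices to isolate the individual identities $(\mathrm{H}_{m,l}^S)$ with $m + l \leq c$ from the single combined sum equation.
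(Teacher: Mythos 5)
Your overall strategy is the same as the paper's: reduce to the depth-zero ring $S=R/(\underline{x})$ via Corollary \ref{c26}, Lemma \ref{l36} and Proposition \ref{p31}, extract $(\mathrm{B}_n^S)$ for all $n\ge c+1$ from the $\binom{t}{l}$-weighted Betti identity by saturating the Serre-type inequalities of Corollary \ref{cor32}, and then feed Koszul-homology data into Proposition \ref{p39a}; your depth-zero case is complete and is exactly the paper's argument. The gap is precisely where you flag it, in the case $t>0$. With only $(\mathrm{B}_n^S)$ for $n\ge c+1$ in hand, Proposition \ref{p39a}(1) yields $(\mathrm{H}_{a,b}^S)$ only when $a+b\ge c+1$, and after cancelling those terms the weighted Koszul-homology equation for a fixed $m\le c$ collapses to the single relation
\[
\sum_{l=0}^{\min(t,\,c-m)}\binom{t}{l}\Bigl(\rh_m(\syz_{c+l+1}^S k)-\sum_{j=1}^{c}\rh_m(\syz_{c+l-j}^S k)\,\rh_j(S)\Bigr)=0 ,
\]
which involves several differences carrying no a priori sign: Lemma \ref{l32} bounds $\rh_m(\syz_n^S k)$ above by an expression of a different shape and gives no inequality between the two sides of $(\mathrm{H}_{m,l}^S)$. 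So positivity of the weights does not let you separate the summands, and the low-degree identities do not simply ``fall out numerically'' from what you have established.

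The missing idea is an interleaved descending induction, which is how the paper closes exactly this gap: one proves $(\mathrm{B}_n^S)$ for all $n\ge 0$ by downward induction on $n$, carrying the Koszul data along. At stage $n\le c$ the inductive hypothesis supplies $(\mathrm{B}_m^S)$ for \emph{every} $m\ge n+1$, not merely $m\ge c+1$; hence Proposition \ref{p39a}(1) gives $(\mathrm{H}_{n,l}^S)$ for all $l\ge 1$, substituting these into the weighted equation kills every summand with $l\ge 1$ and isolates exactly $(\mathrm{H}_{n,0}^S)$, and Proposition \ref{p39a}(2) then upgrades this to $(\mathrm{B}_n^S)$, so the induction proceeds. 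Once $(\mathrm{B}_n^S)$ holds for all $n$, Lemma \ref{l26} shows $S$ is Golod and Proposition \ref{p31} transfers Golodness to $R$. With this reorganization your argument is complete; as written, the separation of the identities $(\mathrm{H}_{m,l}^S)$ with $m+l\le c$ is a genuine gap rather than a routine verification.
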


\begin{proof}
The implication (a) $\Rightarrow$ (b) was proved by Cuong-Dao-Eisenbud-Kobayashi-Polini-Ulrich \cite[Theorem 1.1]{CDEKPU25}.
The implication (b) $\Leftrightarrow$ (c) is clear. 
We will prove (c) $\Rightarrow$ (a).
We may assume $c>0$.

Let $\underline{x}=x_1,\dots,x_t$ be a maximal $R$-regular sequence inside $\fm\setminus \fm^2$.
We set $S=R/(x_1,\dots,x_t)R$.
Remark that $S$ has depth $0$ and embedding dimension $c=e-t$, and that $\rh_j^R(R)=\rh_j^{S}(S)$ following Lemma \ref{l36}. Thanks to Proposition \ref{p31}, it is enough to show that $S$ is Golod.
Applying Corollary \ref{c26} to both sides of \eqref{eq315}, for each $m\ge 0$, we get an isomorphism of $S$-modules:
\begin{equation} \label{eq316}
\bigoplus_{l=0}^{t} \syz^{S}_{c+m+l+1}k^{\oplus \binom{t}{l}} \cong \bigoplus_{l=0}^t\left(\bigoplus_{j=1}^c\syz_{c+m+l-j}^{S}k^{\oplus \rh_j^S(S)}\right)^{\oplus \binom{t}{l}}.
\end{equation}
By taking Betti numbers of both sides of \eqref{eq316}, it follows that the equality
\begin{equation} \label{eq317}
\sum_{l=0}^t\binom{t}{l}\beta_{n+l}^{S}(k)=\sum_{l=0}^t\binom{t}{l}\sum_{j=1}^c\beta_{n-j-1+l}^{S}(k)\rh_j^S(S)
\end{equation}
holds for any $n\ge c+1$.
In view of Corollary \ref{cor32}, it follows from \eqref{eq317} that the equality
$\text{(B}_n^S\text{)}$
holds for any $n\ge c+1$.
In the rest, we prove by descending induction on $n$ that the equality $\text{(B}_n^S\text{)}$
holds for any $0 \le n \le c+1$.
The case of $n=c+1$ was done.
Suppose that $n \le c$. Also, as the induction hypothesis, we suppose that $\text{(B}_m^S\text{)}$ holds for any $m\in\{n+1,\dots,c+1\}$.
By taking Koszul homologies of both sides of \eqref{eq316}, it follows that the equality
\begin{equation} \label{eq319}
\sum_{l=0}^t\binom{t}{l}\rh_n^S(\syz_{c+l+1}^Sk)=\sum_{l=0}^t\binom{t}{l}\sum_{j=1}^c\rh_{n}^{S}(\syz_{c+l-j}^Sk)\rh_j^S(S)
\end{equation}
holds.
According to Proposition \ref{p39a} (1), we see that the equality 
\begin{equation} \label{eqHa}
\rh_n(\syz_{c+l+1}^Sk)=\sum_{j=1}^c\rh_n(\syz_{c+l-j}^Sk)\rh_j^S(S) \tag{$\text{H}_{n,l}^S$}
\end{equation}
holds for any $l \ge 1$. 
Substituting this into \eqref{eq319} yields that the equality $\text{(H}_{n,0}^S\text{)}$ holds.
Then, due to Proposition \ref{p39a} (2), we conclude that $\text{(B}_n^S\text{)}$ holds, and hence that the induction proceeds.
Finally, Lemma \ref{l26} implies that $S$ is Golod. 
\end{proof}

The decomposition of syzygies in Theorem \ref{characterizationGolod} leads to several questions concerning the characterization of Golod rings.

\begin{question}
Let $(R, \fm, k)$ be a Noetherian local ring of embedding dimension $e$. Assume that for some $m\ge 1$, there is an isomorphism
\begin{equation}
\syz_{e+1+m}^R(k) \simeq \bigoplus_{j=1}^{c} \syz_{e-j+m}^R(k)^{\oplus  \rh_{j}(R)}.
\end{equation}
Is $R$ a Golod ring?
\end{question}

For a collection $\mathcal{S}$ of finitely generated $R$-modules, $\operatorname{add}(\mathcal{S})$ denotes the collection of all $R$-modules $X$ such that there exist finitely many modules $M_1,\dots,M_n\in \mathcal{S}$ and integers $a_1,\dots,a_n \ge 1$ such that $X$ is isomorphic to a direct summand of $M_1^{\oplus a_1}\oplus \cdots \oplus M_n^{\oplus a_n}$.

Let $R$ be a Golod local ring. For $n\ge 0$, set $\mathcal{S}_n=\{\syz_0^Rk,\dots,\syz_{n}^Rk\}$. According to \cite[Theorem 1.1]{CDEKPU25}, we know that $\syz_{e+1}^R(k)$ belongs to $\operatorname{add}(\mathcal{S}_{e-1})$. One may ask about the converse.

\begin{question}
Let $(R, \fm, k)$ be a Noetherian local ring of embedding dimension $e$. Assume that for some $n\ge e$, $\syz_{n+1}^R(k)$ belongs to $\operatorname{add}(\mathcal{S}_{n-1})$. Is $R$ a Golod ring?
\end{question}


\end{document}